\documentclass[12pt]{amsart} 
\usepackage{amssymb}
\usepackage{fullpage}
\usepackage[colorlinks=true]{hyperref}
\usepackage{graphicx}

\newcommand{\bbC}{\mathbb{C}}
\newcommand{\bbK}{\mathbb{K}}

\newcommand{\bbQ}{\mathbb{Q}}

\newcommand{\bbZ}{\mathbb{Z}}

\newcommand{\cC}{\mathcal{C}}
\newcommand{\cD}{\mathcal{D}}
\newcommand{\cN}{\mathcal{N}}
\newcommand{\cO}{\mathcal{O}}

\DeclareMathOperator{\core}{sf}

\DeclareMathOperator{\Tr}{Tr}

\numberwithin{equation}{section}

\newtheorem{theorem}{Theorem}[section]
\newtheorem{conjecture}[theorem]{Conjecture}
\newtheorem{corollary}[theorem]{Corollary}
\newtheorem{lemma}[theorem]{Lemma}
\newtheorem{proposition}[theorem]{Proposition}

\theoremstyle{definition}

\newtheorem{remark}[theorem]{Remark}
\newtheorem*{remark-nonum}{Remark}
\newtheorem*{remarks-nonum}{Remarks}

\newcommand{\thmMain}{1.4}

\newcommand{\lemDio}{2.1}
\newcommand{\lemHypg}{2.2}

\newcommand{\eqGDefn}{2.4}

\newcommand{\eqQDefn}{2.7}
\newcommand{\eqKUB}{2.8}
\newcommand{\eqEDefn}{2.9}

\newcommand{\lemUB}{3.7}
\newcommand{\lemGap}{3.8}
\newcommand{\lemTheta}{3.9}

\newcommand{\propRep}{3.1}

\newcommand{\subSectConstr}{2.1} 
\newcommand{\subSectStepiii}{4.4} 

\newcommand{\eqZetaIJRel}{3.23} 
\newcommand{\eqGPNum}{3.24} 
\newcommand{\eqQAbs}{3.25} 
\newcommand{\eqGPb}{3.28}   
\newcommand{\eqOmegaMinusStuffEquality}{3.30} 
\newcommand{\eqOmegaUB}{3.31} 

\newcommand{\eqELB}{4.5} 
\newcommand{\eqQLB}{4.6} 
\newcommand{\eqQUB}{4.7} 
\newcommand{\eqYLUBUsingYK}{4.15} 

\newcommand{\corMainPP}{1.3}
\newcommand{\lemFamilies}{2.4}

\newcommand{\thmMainYLB}{1.3}
\newcommand{\thmMainDLB}{1.4}
\newcommand{\corMainSqr}{1.5}
\newcommand{\lemLBa}{3.2}
\newcommand{\lemGNda}{3.5}
\newcommand{\sectMainProof}{4}

\newcommand{\eqYUBStepOne}{4.16}
\newcommand{\eqYUBStepThree}{4.24}
\newcommand{\eqUBStepFour}{4.27} 
\newcommand{\subsectProofThmForD}{5.6}

\begin{document}

\title[Bounds on the number of squares \ldots]{Bounds on the number of squares in recurrence sequences: $y_{0}=b^{2}$ (II)}

\author{Paul M Voutier}
\address{London, UK}
\email{Paul.Voutier@gmail.com}

\date{}

\begin{abstract}
For binary recurrence sequences, $\left( y_{k} \right)_{k \in \bbZ}$, arising from the solutions
of generalised Pell equations, $X^{2}-dY^{2}=c$, where $y_{0}$ is any
positive square and $c=-2^{\ell}p^{m}$ for an odd prime $p$ and non-negative
integers $\ell$ and $m$, we show that there are at most $4$ distinct squares
with $y_{k}$ sufficiently large. From this result, we also show that there are
at most $7$ distinct squares when $y_{0}=1,2^{2},\ldots,7^{2}$, or once $d$
exceeds an explicit lower bound, without any conditions on the size of such
squares.
\end{abstract}

\keywords{binary recurrence sequences; Diophantine approximations.}

\maketitle

\section{Introduction}
\label{sect:intro}
\subsection{Background}

Recurrence sequences and the study of their arithmetic properties are an important
part of number theory and have a long history -- see \cite{Everest}. Questions
regarding the squares in
such sequences, in particular, binary recurrence sequences, are important in
their own right, as well as for their connection to Diophantine equations of the
form $aX^{2} - bY^{4} = c$. Such equations are also quartic models of elliptic
curves, adding to their significance.
Recently \cite{V5,V6}, we developed a technique for bounding the number
of distinct squares in binary recurrence sequences. In these papers, we applied
our technique to sequences $\left( y_{k} \right)_{k \in \bbZ}$
with $y_{0}=1$ that arise from the solutions of generalised Pell equations,
$X^{2}-dY^{2}=c$, with $-c$ a positive square \cite{V5} or $1$, $2$, $4$, $8$
or $16$ times
a prime power \cite{V6}. We were able to obtain best possible results for the
sequences in \cite{V6} and for most of the sequences in \cite{V5}.

Since then, we \cite{V7} obtained results when $y_{0}=b^{2}$ for any
positive integer $b$ and $-c$ (corresponding to $-N_{\alpha}$ defined below in
Subsection~\ref{subsect:notation})
is a positive square. Here we obtain further results, extending the
results in \cite{V6} to $y_{0}=b^{2}$ for any positive integer $b$ and
$c=-2^{\ell}p^{m}$ with $p$ an odd prime and $\ell$, $m$ non-negative
integers.

\subsection{Notation}
\label{subsect:notation}

We follow the same notation as in our previous papers on this subject. We recall
that notation here.

Let $a$, $b$ and $d$ be positive integers such that $d$ is not a square.
For $\alpha=a+b^{2}\sqrt{d}$, put $N_{\alpha}
=N_{\bbQ \left( \sqrt{d} \right)/\bbQ}(\alpha)=a^{2}-b^{4}d$ and let
$\varepsilon=\left( t+u\sqrt{d} \right)/2$ be a unit in
$\cO_{\bbQ \left( \sqrt{d} \right)}$ with $t$ and $u$ positive integers.

We define two sequences $\left( x_{k} \right)_{k \in \bbZ}$
and $\left( y_{k} \right)_{k \in \bbZ}$ by
\begin{equation}
\label{eq:yk-defn}
x_{k}+y_{k}\sqrt{d}
=\alpha \varepsilon^{2k}.
\end{equation}

Observe that $x_{0}=a$,
$y_{0}=b^{2}$,
\begin{equation}
\label{eq:yPM1}
y_{1}= \frac{b^{2}\left( t^{2}+du^{2} \right)+2atu}{4}, \hspace*{3,0mm}
y_{-1}=\frac{b^{2}\left( t^{2}+du^{2} \right)-2atu}{4}
\end{equation}
and that both sequences satisfy
the recurrence relation
\begin{equation}
\label{eq:yk-recurrence}
u_{k+1}= \frac{t^{2}+du^{2}}{2} u_{k}-u_{k-1},
\end{equation}
for all $k \in \bbZ$. Here $\left( t^{2}+du^{2} \right)/2
=\Tr_{\bbQ \left( \sqrt{d} \right)/\bbQ} \left( \varepsilon^{2} \right)$.

From \eqref{eq:yk-defn}, we have
\begin{equation}
\label{eq:pell}
x_{k}^{2}-dy_{k}^{2}=N_{\alpha}.
\end{equation}
So this pair of sequences provides a family of solutions of the generalised
Pell equation, $x^{2}-dy^{2}=N_{\alpha}$.

We are interested here in squares in the sequence of $y_{k}$'s, so we set the
coefficient of $\sqrt{d}$ in $\alpha$ to be a square. Furthermore, we choose
$\alpha$ such that $b^{2}$ is the smallest square among the $y_{k}$'s and let
$K$ be the largest negative integer such that $y_{K}>b^{2}$.

\begin{remark}
\label{rem:xk-0}
Before proceeding,
we note that we will require $x_{k} \neq 0$ when $y_{k}$ is a square for our proof
(see, for example,
Lemmas~\ref{lem:quad-rep} and \ref{lem:zeta}). This affects our choice of $\alpha$
here, as if there is a $k$ such that $x_{k}=0$ and $y_{k}$ is a square, then by
\eqref{eq:pell} this $y_{k}$ will be the smallest square in the sequence and
hence the rational part of $\alpha$ will be zero. But this is excluded by our
assumption above that $a$ is a positive integer. This can only happen when
$N_{\alpha}$ is $-d$ times a fourth-power. Since $d$ is not a perfect square,
this case does not affect our work in \cite{V7}. Hence we will assume that
$x_{k} \neq 0$ for all $k$ with $y_{k}$ a square until Section~\ref{sect:xk-0}.
We will treat this outstanding case in Section~\ref{sect:xk-0}.
\end{remark}

For any non-zero integer $n$, let $\core(n)$ be the unique squarefree integer
such that $n/\core(n)$ is a square. We will put $\core(1)=1$.

\subsection{Conjectures}
\label{subsect:conjectures}

We first recall from \cite{V5} some conjectures regarding squares in the sequence
of $y_{k}$'s.

\begin{conjecture}
\label{conj:1-seq}
There are at most four distinct integer squares among the $y_{k}$'s.

If $\core \left( \left| N_{\alpha} \right| \right)|(2p)$ where $p$ is an odd
prime, then there are at most three distinct integer squares among the $y_{k}$'s.

Furthermore, if $\left| N_{\alpha} \right|$ is a perfect square, then there are
at most two distinct integer squares among the $y_{k}$'s.
\end{conjecture}

A more general result than Conjecture~\ref{conj:1-seq} also appears to
be true. Removing the restriction to even powers of $\varepsilon$ in \eqref{eq:yk-defn},
define $\left( x_{k}' \right)_{k \in \bbZ}$ and $\left( y_{k}' \right)_{k \in \bbZ}$
by
\begin{equation}
\label{eq:ykPrime-defn}
x_{k}'+y_{k}'\sqrt{d}
=\alpha \varepsilon^{k}.
\end{equation}

\begin{conjecture}
\label{conj:2-seq}
There are at most four distinct integer squares among the $y_{k}'$'s.

If $\left| N_{\alpha} \right|$ is a prime power or a perfect square,
then there are at most three distinct integer squares among the $y_{k}'$'s.
\end{conjecture}

Computational evidence for these conjectures was presented in Subsection~1.3 of
\cite{V5}.

\subsection{Previous Results}
\label{subsect:prev-results}

The following is Theorem~\thmMain{} in \cite{V5}. It establishes
Conjecture~$\ref{conj:1-seq}$ when $b=1$ and $-N_{\alpha}$ is a square, except
in the special cases noted in parts~(a)--(d) where a slightly weaker result
is proven.

\begin{theorem}
\label{thm:b1-sqr}
Let $b=1$, $a$ and $d$ be positive integers, where $d$ is not a square, $N_{\alpha}<0$
and $-N_{\alpha}$ is a square. There are at most two distinct squares among the $y_{k}$'s,
except in the following cases, where there are at most three distinct squares.

\noindent
{\rm (a)} If $u=1$, $t^{2}-du^{2}=-4$, $N_{\alpha} \equiv 12 \pmod{16}$, $\gcd \left( a^{2},d \right)=1,4$
and one of $y_{\pm 1}$ is a perfect square.

\noindent
{\rm (b)} If $u=1$, $t^{2}-du^{2}=-4$, $64|N_{\alpha}$, $\gcd \left( a^{2},d \right)=4$
and one of $y_{\pm 1}$ is a perfect square.

\noindent
{\rm (c)} If $u=2$, $t^{2}-du^{2}=-4$, $N_{\alpha}$ is odd, $\gcd \left( a^{2},d \right)=1$
and one of $y_{\pm 1}$ is a perfect square.

\noindent
{\rm (d)} If $u=2$, $t^{2}-du^{2}=-4$, $16|N_{\alpha}$, $\gcd \left( a^{2},d \right)=1$
and one of $y_{\pm 1}$ is a perfect square.
\end{theorem}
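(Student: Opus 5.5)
This theorem is quoted from \cite{V5}, so the plan follows the method developed there. Since $b=1$, we have $y_{0}=1$. Put $-N_{\alpha}=n^{2}$. Each square $y_{k}=z^{2}$ then gives a solution of the quartic equation $x_{k}^{2}-dz^{4}=-n^{2}$.

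The first step is the algebraic step. Since $-N_{\alpha}$ is a square, the equation factors over $\bbQ(\sqrt{-1})$ and $\bbQ(\sqrt{d})$ in a controlled way, so that $x_{k}+n\sqrt{-1}$ is, up to a bounded factor, a square in $\bbQ(\sqrt{-1})$. I would use this to attach to each square $y_{k}$ an algebraic number $\zeta_{k}$ in an imaginary quadratic field, built from $x_{k}$, $z$, $n$ and $\sqrt{d}$. The condition $x_{k}\neq 0$ is needed at this point. The aim is for $\zeta_{k}$ to be a very good approximation to a fixed algebraic number $\omega$ of the form $\left(\left(a+n\sqrt{-1}\right)/\left(a-n\sqrt{-1}\right)\right)^{1/4}$, or a closely related root of unity twist. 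The approximation should satisfy $|\omega-\zeta_{k}|\ll y_{k}^{-1}$ with an explicit constant.

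The second step uses the hypergeometric method. I would construct the Thue--Siegel type rational function approximations $p_{r}(z)/q_{r}(z)$ to $(1-z)^{1/4}$, in the form given by the hypergeometric lemma, and bound their denominators using the usual $\gcd$ and factorial estimates. Let $\cD$ denote the resulting denominator bound. These approximations yield an effective irrationality measure for $\omega$, valid once the height of $\omega$ is large relative to $\cD$. The gap principle then enters as follows. For two squares $y_{k_{1}}<y_{k_{2}}$, the fact that $\zeta_{k_{1}}\neq\zeta_{k_{2}}$ both approximate $\omega$ well forces $y_{k_{2}}\gg y_{k_{1}}^{2}$ or so. Iterating this, a third and further square would be so large that it contradicts the upper bound $|\omega-\zeta|\gg y^{-\kappa}$ coming from the effective measure. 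Here $\kappa<2$ after the choice of $r$. This gives at most two squares in the range $y_{k}$ large.

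The third step, which I expect to be the main obstacle, removes the ``sufficiently large'' restriction so that the count holds with no size condition. Here I would show directly that among the small indices only $y_{0}=1$ and possibly $y_{\pm1}$ can be squares. I would use the explicit formula \eqref{eq:yPM1} and the recurrence \eqref{eq:yk-recurrence}: $y_{\pm 2}$ already exceeds the threshold needed in the second step once $t^{2}+du^{2}$ is not tiny. The exceptional small-unit situations, $u=1$ or $u=2$ with $t^{2}-du^{2}=-4$, are exactly where the gap estimates degrade and a third square ($y_{\pm1}$) can genuinely occur. For these I would carry out a congruence analysis modulo $16$ and $64$ of $N_{\alpha}$ and of $\gcd(a^{2},d)$, which isolates cases (a)--(d). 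In the remaining cases I expect either a contradiction or a finite computation. The delicate part is making all constants explicit enough that the finitely many leftover $(d,a)$ can be checked numerically.
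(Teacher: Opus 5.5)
The paper only quotes this result from \cite{V5}. Section~\ref{sect:proof-thm12}, however, reproduces the method of \cite{V5}, and measured against it your second step has a genuine gap. You apply the hypergeometric method to a \emph{fixed} number $\theta=\omega_{0}^{1/4}$, $\omega_{0}=(a+n\sqrt{-1})/(a-n\sqrt{-1})$, attached to $y_{0}=1$. For this $\theta$ one has $u_{1}=2a$, $u_{2}=2n$, $t'=-1$ and $a^{2}+n^{2}=d$, so \eqref{eq:e-defn} gives $E=|g|\cN_{d',4}(a+\sqrt{d})/(2e^{1.68}|N_{\alpha}|)<0.19\,|g|\cN_{d',4}\sqrt{d}/|N_{\alpha}|$. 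When $\gcd(a,n)=1$ one has $|g|\cN_{d',4}\leq 16$, so $E<1$ as soon as $|N_{\alpha}|>3\sqrt{d}$, and then Lemma~\ref{lem:2.1} gives nothing. Beating the $|q|^{-4}$ quality of your approximations would even need roughly $d\gg|N_{\alpha}|^{3}$. For $b=1$ you only know $|N_{\alpha}|=d-a^{2}<d$, so this fails for infinitely many $(a,d)$, and no finite computation can fill the hole.

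The missing idea is to move the base point to the smallest non-trivial square. Take $\theta=\omega_{k_{1}}^{1/4}$ with $\omega_{k_{1}}=(x_{k_{1}}+N_{\varepsilon^{k_{1}}}\sqrt{N_{\alpha}})/(x_{k_{1}}-N_{\varepsilon^{k_{1}}}\sqrt{N_{\alpha}})$, so $u_{1}=2x_{k_{1}}$. Approximate it by $p/q$ with $q=(r_{k_{1}}-s_{k_{1}}\sqrt{-1})(r_{k_{2}}+s_{k_{2}}\sqrt{-1})$, built from the \emph{fourth-power} representations of Proposition~3.1 of \cite{V5}. (A representation as a square, as you wrote, only yields approximations of quality $|q|^{-2}$, which is useless.) Then $|q|=\sqrt{f_{k_{1}}f_{k_{2}}}\,(y_{k_{1}}y_{k_{2}})^{1/4}$ and $|\omega_{k_{1}}-(p/q)^{4}|=2\sqrt{|N_{\alpha}|}/(\sqrt{d}\,y_{k_{2}})$. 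By \eqref{eq:E-LB1} and \eqref{eq:Q-UB2}, both $E$ and $Q$ grow linearly with $y_{k_{1}}$. So $E>1$, and with it the machinery of Section~\ref{sect:proof-thm12}, is available as soon as $y_{k_{1}}\gg|N_{\alpha}|/\sqrt{d}$. The bound $y_{k}\geq|N_{\alpha}|u^{2}/4$ from Lemma~\ref{lem:Y-LB1}(b) guarantees this for every relevant $k\neq0$, apart from small $d$ and $u$.

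Your count is also off by one. The statement includes $y_{0}=1$. So in the generic case a proof must reach a contradiction from just \emph{two} squares $y_{k_{1}}<y_{k_{2}}$ with $k_{1},k_{2}\neq0$, and from three in cases (a)--(d). Your scheme only excludes a third large square and then adds $y_{0}$ and possibly $y_{\pm1}$. That yields at most three (resp.\ four) squares, not two (resp.\ three).

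With only one pair available, a single application of the gap principle must be played against the upper bounds in the four cases ($r_{0}=1$ or $r_{0}>1$; $\eta pq_{r_{0}}=p_{r_{0}}q$ or not). Note that the gap principle is cubic, $y_{k_{2}}\gg(d/|N_{\alpha}|)^{2}y_{k_{1}}^{3}$ (cf.\ Lemma~\ref{lem:gap}), not quadratic as you wrote.

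What makes an arbitrary pair usable here is that $\core(N_{\alpha})=-1$. Every fourth root of unity, hence every possible $\eta$, lies in the imaginary quadratic field $\bbQ(\sqrt{-1})$. In the present paper $\eta$ must be $\pm1$, so only pairs with $\zeta_{4}^{(m_{1})}=\pm\zeta_{4}^{(m_{3})}$ can be used; together with the double use of the gap principle, this is why five squares are assumed there.

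Your Step~3 then has to show that this pair argument succeeds for every pair except those with $y_{k_{1}}\in\{y_{1},y_{-1}\}$ under the hypotheses of (a)--(d). An unspecified congruence analysis does not establish this.
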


The following is the main theorem in \cite{V6}.

\begin{theorem}
\label{thm:b1-pp}
Let $b=1$ and let $a$, $m$ and $p$ be non-negative integers with $a \geq 1$ and
$p$ a prime. Conjecture~$\ref{conj:1-seq}$ holds when $N_{\alpha}=-p^{m}$,
$-2p^{m}$, $-4p^{m}$, $-8p^{m}$ and $-16p^{m}$.
\end{theorem}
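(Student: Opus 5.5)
The plan is to follow the hypergeometric method of \cite{V5,V6}, specialised to $b=1$ and $N_{\alpha}\in\{-p^{m},-2p^{m},-4p^{m},-8p^{m},-16p^{m}\}$. Suppose $y_{k}=Y^{2}$ is a square with $x_{k}\neq 0$ (the case $x_{k}=0$ is handled separately, as in Remark~\ref{rem:xk-0}). Then $x_{k}^{2}-dY^{4}=N_{\alpha}$. We factor this in $\bbQ\left(\sqrt{-dN_{\alpha}}\right)$, or equivalently in $\bbQ\left(\sqrt{N_{\alpha}}\right)$: write $\left(x_{k}+\sqrt{N_{\alpha}}\right)/\left(x_{k}-\sqrt{N_{\alpha}}\right)$ and extract a fourth root of an associated algebraic number. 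The goal is a quadratic representation of each square $y_{k}$ as a norm-type expression $\beta\gamma_{k}^{2}$, with $\beta$ fixed and $\gamma_{k}$ in an order of an imaginary quadratic field.

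The key point is that, because $|N_{\alpha}|$ is at most $16$ times a prime power, the ideal $\left(x_{k}+\sqrt{N_{\alpha}}\right)$ has very few possible factorisations. The prime $p$ splits or ramifies, so each square $y_{k}$ falls into one of a bounded number of classes. Squares in the same class give rise to algebraic numbers $\zeta_{k_{1}},\zeta_{k_{2}}$ whose ratio is extremely close to a fixed root of unity. In this way each square produces a very good rational approximation to $\sqrt{\cdot}$ of a fixed quantity $\omega$, which is essentially $\sqrt{d}/a$ or a fourth root attached to the sequence.

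Next we construct the hypergeometric Padé approximants to $(1-z)^{1/4}$ (or $1/2$) as in \cite{V5}. This gives an effective irrationality measure valid whenever one square is already large. A gap principle then follows from the recurrence \eqref{eq:yk-recurrence}, using the growth $y_{k}\approx\varepsilon^{2|k|}$. Combined with the Diophantine lemma, it shows that within each class, after the first large square, at most one further square can occur. We then count: there are the small squares $y_{0}=1$ and possibly $y_{\pm1}$, and there are at most two classes for large squares. The parity and sign constraints from $N_{\alpha}<0$ and the $2$-adic structure ($\ell\le 4$) cut the total down to the bound of Conjecture~\ref{conj:1-seq}, namely $3$ squares when $\core(|N_{\alpha}|)\mid 2p$, and $2$ when $|N_{\alpha}|$ is a square, which falls back to Theorem~\ref{thm:b1-sqr}. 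The remaining finitely many small cases, with $d$ or $\varepsilon$ below explicit bounds, would be cleared by direct computation.

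The main obstacle is making the gap principle and the irrationality measure meet. The approximation lemma needs $y_{k}$ to exceed an explicit threshold depending on $|N_{\alpha}|$. When $\ell$ is as large as $4$, the power of $2$ inflates the constants, so a separate $2$-adic analysis is needed: tracking which of $x_{k}\pm\sqrt{N_{\alpha}}$ absorbs the factors of $2$ lets us replace $N_{\alpha}$ by $N_{\alpha}/2^{j}$ in the estimates. I would first try to handle this by the sharper arithmetic lower bound $|Q|\ge\ldots$ on denominators used in \cite{V6}, and fall back to computation for the residual range.
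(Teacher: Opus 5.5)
You are right that the theorem should come out of the hypergeometric method. Note that this paper does not reprove it; it only quotes it from \cite{V6}. Measured against the machinery the paper does display, though, your outline has genuine gaps.

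\textbf{The gap principle.} It cannot come from the recurrence. Growth of the form $y_k\approx\varepsilon^{2|k|}$ only gives a multiplicative separation $y_{k'}\gg du^{2}y_k$ between two squares, independent of their size. That separation is compatible with the upper bounds the hypergeometric method actually yields when $r_0=1$, namely $y_{m_3}^{3}\ll N_{\alpha}^{2}b^{4}y_{m_1}^{5}$ and $y_{m_3}\ll (d/|N_{\alpha}|)\,y_{m_1}^{3}$.

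What is needed is a polynomial gap: the cubic one of Lemma~\ref{lem:gap}, $y_j>15d^{2}y_i^{3}/(b^{4}N_{\alpha}^{2})$ whenever $\zeta_4^{(i)}=\pm\zeta_4^{(j)}$. It comes from the integrality of $(r_i-s_i\sqrt{\core(N_\alpha)})(r_j+s_j\sqrt{\core(N_\alpha)})$, which forces a nonzero numerator of absolute value at least $2$, not from the size of $\varepsilon$.

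It has to be applied twice. That is why the number approximated is $\omega_{m_1}^{1/4}$, attached to the smallest of \emph{three} same-class large squares. It is not a quantity fixed by the sequence such as $\sqrt{d}/a$ or the $\omega$ of $\alpha$ itself. For the latter, $E\asymp|g|\cN_{d',4}\sqrt{d}/|N_\alpha|$, which need not exceed $1$; for example $d=8$, $a=1$, $N_\alpha=-7$ gives $E<0.11$.

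\textbf{The count.} This is the more serious gap. Carry out your plan as described: at most two large squares in each of the classes $\{\pm1\}$ and $\{\pm i\}$, plus the small squares $y_0$, $y_1$, $y_K$. The method then gives at most four large squares and at most seven squares altogether. For $b=1$ that is exactly Theorem~\ref{thm:1.2-seq-new} and Corollary~\ref{cor:1.3-seq-new}(a) of this paper.

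The theorem you are asked to prove is the sharp statement. It says at most three squares, so two squares with $k\neq 0$ must exclude any third. It also says at most two when $|N_\alpha|$ is a square. Your sentence ``parity, sign and $2$-adic constraints cut the total down'' carries all of that content and supplies no mechanism for it.

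Nor can the square case be handed to Theorem~\ref{thm:b1-sqr}. In its exceptional cases (a)--(d) it only gives three, and these occur with $N_\alpha$ of the shape in question. For example, $d=40$, $a=2$, $t=6$, $u=1$ gives $N_\alpha=-36=-4\cdot 3^{2}$ and $y_1=25$, which is case~(a). Conjecture~\ref{conj:1-seq} requires at most two squares there.

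So your outline gives at best the non-sharp bound of seven. The missing idea is the reduction from seven to three, and to two in the square case. That reduction is the substance of \cite{V6}, and nothing in your sketch provides it.
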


Lastly, the following is Corollary~\corMainSqr{} in the corrected version of
\cite{V7} (see the arXiv URL in \cite{V7}). It extends Theorem~\ref{thm:b1-sqr}
to any $b$.

\begin{theorem}
\label{thm:any-b-sqr}
Let $a$, $b$ and $d$ be positive integers, where $d$ is not a square
and $-N_{\alpha}$ is a square.

{\rm (a)}  If $b$ is of the form $b=b_{1}b_{2}$ where $b_{1} \in \left\{ 1,5,13,17 \right\}$
and $b_{2}$ has no prime factors congruent to $1 \pmod{4}$, there are at most
five distinct squares among the $y_{k}$'s.

{\rm (b)} If $b$ is not of the form in part~{\rm (a)}, there are at most five
distinct squares among the $y_{k}$'s, provided that
\[
d \geq \frac{17b^{28/13}\left| N_{\alpha} \right|^{1/2}}{u^{24/13}}.
\]
\end{theorem}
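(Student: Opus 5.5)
This theorem is quoted from the corrected version of \cite{V7}, Corollary~\corMainSqr{} there. Within the present paper we would therefore simply cite it. If a self-contained argument is wanted, the plan is to rerun the hypergeometric method of \cite{V5,V7} in the setting $y_{0}=b^{2}$ with $-N_{\alpha}$ a square. The argument has three steps.

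\emph{Step 1: reduction to a Thue-type approximation problem.} For each $k$ with $y_{k}=y^{2}$ and $x_{k}\neq 0$, relation \eqref{eq:pell} gives $x_{k}^{2}-dy^{4}=N_{\alpha}$. Factoring over $\bbQ\left(\sqrt{-N_{\alpha}}\right)$ (legitimate since $-N_{\alpha}$ is a square, so this is really a factorisation over $\bbQ(i)$ scaled), we obtain an algebraic number $\zeta_{k}$. It lies in a fixed quadratic field, and its ratio to a fixed reference element built from $\alpha$ (coming from $y_{0}=b^{2}$) is a fourth power up to a bounded factor.

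This makes $y/b$ an extremely good approximation to a fourth root of a quantity of the shape $\left(x_{0}+\sqrt{N_{\alpha}}\right)/\left(x_{0}-\sqrt{N_{\alpha}}\right)$, with error of size $\left|N_{\alpha}\right|^{1/2}/(d y^{4})$. This is the analogue, for general $b$, of the $b=1$ setup of \cite{V5}.

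\emph{Step 2: effective irrationality measure.} Apply the hypergeometric Pad\'e approximants to $(1-z)^{1/4}$, with denominators and common factors controlled. This yields an effective irrationality measure for that fourth root, valid once the approximated quantity is close enough to $1$, i.e. once $d$ is large relative to $b$ and $\left|N_{\alpha}\right|$.

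The factor $b$ enters through the size of the element $x_{0}\pm\sqrt{N_{\alpha}}$. This is why the exponents $b^{28/13}$ and $u^{24/13}$ in part~(b) appear: they come from balancing the Pad\'e growth rate against the quality of the approximations. Comparing the gap principle (consecutive squares $y_{k}$ grow at least like $\varepsilon^{4}$ in $k$, made explicit via \eqref{eq:yk-recurrence}) with the irrationality measure shows that at most a bounded number of large squares can exist beyond the first two.

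\emph{Step 3: counting.} The count proceeds as follows.
\begin{itemize}
\item $y_{0}=b^{2}$ is one square.
\item At most one further square lies in each direction below the threshold where the measure applies. This uses $K$, the fact that $y_{-1},y_{1}$ are determined by \eqref{eq:yPM1}, and a gap lemma.
\item Only a limited number of squares lie above the threshold.
\end{itemize}
Together these give five.

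In part~(a), the arithmetic restriction on $b$ (primes $\equiv 1 \pmod 4$ occurring only via $b_{1}\in\{1,5,13,17\}$) controls the number of essentially distinct representations of $b^{2}$ as a norm from $\bbZ[i]$. This lets one replace the size hypothesis on $d$ by a finite check of the small cases, and it forces the relevant fourth-root approximations to come from a single class.

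\emph{Main obstacle.} The hardest step is Step~2's bookkeeping of $b$ in the Pad\'e denominators and in the "small" cases. There the $d$-lower bound must be met uniformly, or else avoided via the representation count of part~(a). I would first try to reproduce the $b=1$ constants of \cite{V5}, tracking $b$ multiplicatively.
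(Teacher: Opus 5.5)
As a citation, your proposal matches the paper exactly: the paper gives no proof of this statement and simply quotes Corollary~1.5 of the corrected version of \cite{V7}. The self-contained sketch you add, however, has genuine gaps.

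\textbf{The gap principle.} Growth along the index gives only a factor of about $du^{2}$ per step, and nothing at all between squares on opposite sides of $k=0$. The hypergeometric bounds, by contrast, are polynomial in $y_{m_{1}}$: for instance, when $r_{0}=1$ one gets $y_{m_{3}}^{3}\ll b^{4}N_{\alpha}^{2}y_{m_{1}}^{5}$ (cf.\ \eqref{eq:y3UB}). This is contradicted only by a Diophantine gap principle of the type of Lemma~\ref{lem:gap} (coming from \eqref{eq:gpEqnb1}), $y_{j}\gg d^{2}y_{i}^{3}/(b^{4}N_{\alpha}^{2})$, applied twice through an intermediate square. That is why one assumes enough large squares to have three to which it applies (five in Section~\ref{sect:proof-thm12}).

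\textbf{The approximated number.} It must be $\omega_{m_{1}}^{1/4}$, built from $x_{m_{1}}$ of the smallest large square, approximated by the quotients formed from $(r_{m_{1}},s_{m_{1}})$ and $(r_{m_{3}},s_{m_{3}})$. It cannot be a fourth root built from $x_{0}=a$ and approximated by $y/b$. Since $a^{2}=db^{4}-|N_{\alpha}|$, and the bound on $d$ in (b) is weak when $u$ is large, $a^{2}\leq 3|N_{\alpha}|$ is possible. Then $|\omega-1|=2\sqrt{|N_{\alpha}|}/\sqrt{a^{2}+|N_{\alpha}|}\geq 1$ and the method is unavailable. With $x_{m_{1}}$ instead, \eqref{eq:E-LB1} makes $E$ grow with $y_{m_{1}}$.

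\textbf{Source of the $d$-hypothesis.} It does not come from making some quantity close to $1$. It comes from comparing the case-by-case upper bounds for $y_{m_{1}}$ with the lower bound $y_{k_{1}}\geq 0.99|N_{\alpha}|du^{4}/(4b^{2})$ of Lemma~\ref{lem:Y-LB1}(b), valid for $k_{1}\geq 2$ or $k_{1}\leq K-1$. This is also the only place $u$ enters. In the present paper, the exponents $b^{28/13}$ and $u^{24/13}$ arise exactly from combining \eqref{eq:yk-UB-step2} with \eqref{eq:ym1-LB}.

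\textbf{The count in Step~3.} This step misplaces the threshold, since nothing bounds the number of squares below it. Instead:
\begin{itemize}
\item the restriction $k\geq 2$ or $k\leq K-1$, together with Lemma~\ref{lem:Y-LB1}(b) and the $d$-hypothesis, puts all those $y_{k}$ above the threshold;
\item the terms $y_{K+1},\ldots,y_{-1}$ are at most $b^{2}$, so they contribute no new square;
\item five is two large squares plus $y_{0},y_{1},y_{K}$, just as the seven of Corollary~\ref{cor:1.3-seq-new} is $4+3$.
\end{itemize}
Proving \emph{at most two} large squares is the whole content of the argument, and your \emph{limited number} leaves it open.

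\textbf{Part~(a).} You are right that a finite check replaces the hypothesis on $d$. But what makes the set of $b$ to check finite is a reduction, not a count of representations in $\bbZ[i]$. Put $n^{2}=-N_{\alpha}$, so $a^{2}+n^{2}=db^{4}$. If $q=2$ or $q\equiv 3 \pmod{4}$ and $q^{e}\,\|\,b$, then $q$ is ramified or inert in $\bbZ[i]$. For inert $q$, $v_{q}(a^{2}+n^{2})=2\min(v_{q}(a),v_{q}(n))$; for $q=2$ it is at most $2\min+1$. In both cases $q^{2e}$ divides both $a$ and $n$. Hence $\alpha=q^{2e}\alpha'$ with $b'=b/q^{e}$, and $y_{k}$ is a square exactly when the corresponding term of the $\alpha'$-sequence is. This reduces everything to $b=b_{1}$ composed of primes $\equiv 1 \pmod{4}$. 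The small values $b_{1}\in\{1,5,13,17\}$ are then disposed of by a search over $d$ below the explicit bound, as Subsection~\ref{subsect:proof-thm13a} does here for $b\leq 7$.
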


\subsection{New Results}
\label{subsect:new-results}

In this paper, we make further progress towards establishing Conjectures~\ref{conj:1-seq}
and \ref{conj:2-seq}. The two theorems below are analogues of Theorems~\thmMainYLB{}
and \thmMainDLB{} in \cite{V7}, respectively.

\begin{theorem}
\label{thm:1.2-seq-new}
Let $a$, $b$ and $d$ be positive integers, where $d$ is not a square, $N_{\alpha}<0$
and $N_{\alpha}=-2^{\ell}p^{m}$ with $p$ an odd prime and $\ell$, $m$ non-negative
integers. There are at most four distinct squares among the
$y_{k}$'s with $k \geq 1$ or $k \leq K$, and
\[
y_{k}> \frac{423b^{8/3}\left| N_{\alpha} \right|^{2}}{\sqrt{d}}.
\]
\end{theorem}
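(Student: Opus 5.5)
We will follow the hypergeometric method of \cite{V5,V6,V7}, adapted to $N_{\alpha}=-2^{\ell}p^{m}$. The plan has four steps.

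\textbf{Step 1: an algebraic number attached to each square.} Suppose $y_{k}=y^{2}$ with $x_{k}\neq 0$ (see Remark~\ref{rem:xk-0}). From \eqref{eq:pell}, $x_{k}^{2}-dy^{4}=N_{\alpha}$, so
\[
\left( x_{k}+y^{2}\sqrt{d} \right)\left( x_{k}-y^{2}\sqrt{d} \right) = N_{\alpha}.
\]
We work in $\bbK=\bbQ\bigl(\sqrt{N_{\alpha}}\bigr)$ and write $x_{k}+\sqrt{N_{\alpha}}=\beta\,\gamma_{k}^{2}$. Here $\beta$ is drawn from a finite, explicitly controlled set determined by $\alpha$, the unit, and the ideal factorisation of $(2)$ and $(p)$ in $\bbK$.

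The hypothesis $N_{\alpha}=-2^{\ell}p^{m}$ is what makes this tractable. The only primes dividing both conjugates of $x_{k}+\sqrt{N_{\alpha}}$ lie above $2$ and $p$, so the ideal $(x_{k}+\sqrt{N_{\alpha}})$ is, up to these prime powers, a square. We then show that the resulting quotient $\beta^{-1}(x_{k}+\sqrt{N_{\alpha}})$ takes the same shape for all squares in a given class, as was done for $b=1$ in \cite{V6}. Compared with \cite{V6}, the factor $b$ enters only through $y_{0}=b^{2}$ and the size of $\beta$.

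\textbf{Step 2: good approximations and a gap principle.} For each such square we obtain a very good approximation $\zeta_{k}$ to a fixed algebraic number $\omega=\bigl(\beta/\overline{\beta}\bigr)^{1/4}$ (or a similar root) on the unit circle. The approximation satisfies
\[
\left|\omega-\zeta_{k}\right| \ll \frac{\left|N_{\alpha}\right|^{1/2}}{y_{k}\sqrt{d}}.
\]
Two distinct squares $y_{k_{1}}<y_{k_{2}}$ in the same class then give distinct approximations. Combining this with the growth of $y_{k}$ by a factor of roughly $\varepsilon^{2}$ per step, we derive a gap principle of the form
\[
y_{k_{2}}\gg y_{k_{1}}^{3}\,d/\bigl(b^{c}\left|N_{\alpha}\right|^{c'}\bigr).
\]
We would establish this by the same argument as in \cite{V7}, tracking the dependence on $b$.

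\textbf{Step 3: the hypergeometric effective irrationality measure.} Using the hypergeometric (Thue--Siegel) method for $\omega$, with denominators controlled by the two primes $2$ and $p$, we obtain an effective irrationality measure
\[
\left|\omega-\zeta\right|\ge \frac{c_{1}}{\left|\mathrm{den}\,\zeta\right|^{\kappa}}
\]
valid for large heights, with $\kappa<2$ explicit. We anticipate exponent parameters close to those of \cite{V7}, which produce the $b^{8/3}$ and $\left|N_{\alpha}\right|^{2}$ in the threshold.

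\textbf{Step 4: combining the bounds.} Suppose there were five squares with $y_{k}>423b^{8/3}\left|N_{\alpha}\right|^{2}/\sqrt{d}$ among the indices $k\geq1$ or $k\leq K$. By pigeonhole over the (at most two, after symmetry $k\leftrightarrow -k$) relevant $\beta$-classes and the sign choices, enough of these squares fall together that the gap principle applies twice. Applying it twice to the smallest of them lifts the height beyond the range where the irrationality measure holds. The approximation $\zeta$ coming from the largest square then contradicts the lower bound of Step 3.

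\textbf{Main obstacle.} The main obstacle is Step 3 combined with the constant chasing. We must ensure that the arithmetic factor of the hypergeometric denominators at the primes $2$ and $p$ is small enough when $2$ and $p$ ramify or split in $\bbK$, so that $\kappa$ stays below the value the gap principle requires. We would first try to reuse the $2$-adic and $p$-adic gcd bounds from \cite{V6} verbatim, inserting the extra $b$-dependence from $\beta$, and check that the constant $423$ emerges.
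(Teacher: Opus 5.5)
\textbf{There is a genuine gap in Steps~3 and 4: you apply the hypergeometric method to a \emph{fixed} number attached to $\alpha$.}

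By Lemma~\ref{lem:quad-rep}, every square carries the same factor $a+\sqrt{N_{\alpha}}$ times a fourth power, so there are no separate $\beta$-classes. Your $\omega$ is therefore essentially $\left( \left( a-\sqrt{N_{\alpha}} \right)/\left( a+\sqrt{N_{\alpha}} \right) \right)^{1/4}$, i.e.\ $u_{1}=2a$, $u_{2}=2\sqrt{N_{\alpha}/\core(N_{\alpha})}$, $t'=\core(N_{\alpha})$ in Subsection~\ref{subsect:const}. Then $\sqrt{u_{1}^{2}-t'u_{2}^{2}}=2b^{2}\sqrt{d}$ and $u_{2}^{2}|t'|=4\left| N_{\alpha} \right|$. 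This gives $E=|g|\cN_{d',4}\left( a+b^{2}\sqrt{d} \right)/\left( 2e^{1.68}\left| N_{\alpha} \right| \right)$ and $Q=2e^{1.68}\left( a+b^{2}\sqrt{d} \right)/\left( |g|\cN_{d',4} \right)$.

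These depend only on $a,b,d,N_{\alpha}$, and nothing forces $E>1$, let alone $E$ large compared with $Q$. For example, take $a=2$, $b=1$, $d=42$, so $N_{\alpha}=-38$; this is the case singled out at the end of Subsection~\ref{subsect:proof-thm13a}. There $|g|\cN_{d',4}=2\sqrt{2}$ and $E<0.06$, so Lemma~\ref{lem:2.1} gives nothing. Assuming the squares are large cannot repair this, since $\omega$ does not depend on them.

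Step~4 has the same problem. Suppose a measure for a fixed number held for large heights and beat the approximation coming from a \emph{single} square. Then it would exclude every large square by itself, and the gap principle and the count of four would be irrelevant. If it does not beat that approximation, the gap principle cannot help, because the measure's strength is independent of the squares.

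\textbf{The missing idea is to rebase at the smallest of three squares.}

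First pigeonhole the $\zeta_{4}^{(k)}$ into $\{\pm 1\}$ or $\{\pm i\}$. This lets Lemma~\ref{lem:gap} apply to $(m_{1},m_{2})$ and $(m_{2},m_{3})$, and Lemma~\ref{lem:zeta}(b) gives $\eta=\pm1$, which keeps $\eta p/q$ in the imaginary quadratic field. The paper then approximates $\theta=\omega_{m_{1}}^{1/4}$, built with $u_{1}=2x_{m_{1}}$, by $\eta p/q$ where $q=\left( r_{m_{1}}-s_{m_{1}}\sqrt{\core(N_{\alpha})} \right)\left( r_{m_{3}}+s_{m_{3}}\sqrt{\core(N_{\alpha})} \right)$. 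Here $|q|=\sqrt{f_{m_{1}}f_{m_{3}}}\left( y_{m_{1}}y_{m_{3}} \right)^{1/4}/b$ and $\left| \omega_{m_{1}}-(p/q)^{4} \right|=2\sqrt{\left| N_{\alpha} \right|}/\left( \sqrt{d}\,y_{m_{3}} \right)$.

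Now $E>0.1832|g|\cN_{d',4}\sqrt{d}\,y_{m_{1}}/\left| N_{\alpha} \right|$ and $Q<21.47\sqrt{d}\,y_{m_{1}}/\left( |g|\cN_{d',4} \right)$ both grow with $y_{m_{1}}$. That is why the theorem's hypothesis is a lower bound on the smallest square, with $\sqrt{d}$ in the denominator. The gap principle applied twice gives $y_{m_{3}}>\left( 15d^{2}/\left( b^{4}N_{\alpha}^{2} \right) \right)^{4}y_{m_{1}}^{9}$, which makes this \emph{relative} approximation too good. The four cases ($r_{0}=1$ or $r_{0}>1$, combined with $\eta pq_{r_{0}}\neq p_{r_{0}}q$ or equality) handle the non-vanishing hypothesis of Lemma~\ref{lem:2.1}(b).

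\textbf{Your stated main obstacle is also misplaced.} The hypothesis $N_{\alpha}=-2^{\ell}p^{m}$ does not enter through the hypergeometric denominators; these are controlled uniformly by $2\le|g|\cN_{d',4}\le16\sqrt{\left| N_{\alpha} \right|}$. It enters through Lemma~\ref{lem:quad-rep}, which needs $\core\left( \left| N_{\alpha} \right| \right)\in\{2,p,2p\}$. That lemma yields $f\mid 4b^{2}$, hence $f_{m_{1}}f_{m_{3}}\le16b^{4}$.
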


\begin{theorem}
\label{thm:1.3-seq-new}
Let $a$, $b$ and $d$ be positive integers, where $d$ is not a square, $N_{\alpha}<0$
and $N_{\alpha}=-2^{\ell}p^{m}$ with $p$ an odd prime and $\ell$, $m$ non-negative
integers.

{\rm (a)} For $b=1,\ldots, 7$, there are at most four distinct squares among the
$y_{k}$'s with $k \geq 2$ or $k \leq K-1$.

{\rm (b)} For $b \geq 8$, there are at most four distinct squares among the $y_{k}$'s
with $k \geq 2$ or $k \leq K-1$, provided that
\[
d \geq \frac{37b^{28/13} \left| N_{\alpha} \right|^{1/2}}{u^{24/13}}.
\]
\end{theorem}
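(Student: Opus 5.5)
We derive Theorem~\ref{thm:1.3-seq-new} from Theorem~\ref{thm:1.2-seq-new}, following the argument of Theorem~\thmMainDLB{} in \cite{V7}. By Theorem~\ref{thm:1.2-seq-new}, it suffices to show that every $y_{k}$ with $k \geq 2$ or $k \leq K-1$ satisfies
\[
y_{k} > \frac{423b^{8/3}\left| N_{\alpha} \right|^{2}}{\sqrt{d}}.
\]
Then the squares in this range all lie in the range covered by Theorem~\ref{thm:1.2-seq-new}, so there are at most four of them. The task is therefore an explicit lower bound for $y_{2}$ and $y_{K-1}$. Monotonicity of $(y_{k})$ for $k \geq 1$ and for $k \leq K$ follows from the recurrence \eqref{eq:yk-recurrence}, since $(t^{2}+du^{2})/2 \geq 2$ and $y_{1},y_{K}>b^{2}$.

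\emph{Step 1: lower bound for $y_{2}$.} From \eqref{eq:yk-defn}, $y_{k}\sqrt{d} = (\alpha\varepsilon^{2k}-\bar\alpha\bar\varepsilon^{2k})/2$, so $y_{k} \approx \alpha \varepsilon^{2k}/(2\sqrt{d})$. Using $\varepsilon \geq u\sqrt{d}/2$ roughly, we get $y_{2} \gg \alpha u^{8} d^{4}/\sqrt{d}$. Since $a=x_{0}>0$ and $\alpha=a+b^{2}\sqrt{d} > b^{2}\sqrt{d}$, this gives $y_{2} \gg b^{2}u^{8}d^{4}$, with a similar bound for $y_{K-1}$. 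For $y_{K-1}$ we use $\bar\alpha$ and the choice of $K$; with $N_{\alpha}<0$, $\bar\alpha$ is negative and $|\bar\alpha| = |N_{\alpha}|/\alpha$, which makes things delicate. Comparing with the required $423 b^{8/3}|N_{\alpha}|^{2}/\sqrt{d}$ leaves the problem of handling $|N_{\alpha}|$ in terms of $d$ and $b$.

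\emph{Step 2: controlling $|N_{\alpha}|$.} Because $b^{2}$ is the smallest square and $\alpha$ is chosen minimally, we have $|N_{\alpha}| \ll b^{4}d$-type bounds. More precisely, $|N_{\alpha}| = b^{4}d-a^{2} \leq b^{4}d$. Substituting gives a requirement of the form $u^{8}d^{4}b^{2} \gg b^{8/3}b^{8}d^{2}/\sqrt{d}$, which holds once $d \gg b^{c}/u^{c'}$. Optimising the use of $|N_{\alpha}|^{1/2}$ versus $b$ gives the stated threshold $d \geq 37b^{28/13}|N_{\alpha}|^{1/2}/u^{24/13}$, whose exponents $28/13$ and $24/13$ match \cite{V7}. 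The only change is the constant, which goes from $17$ to $37$ because of the larger constant $423$ in Theorem~\ref{thm:1.2-seq-new}.

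\emph{Step 3: small $b$ (part~(a)).} For $b\leq 7$ we need the inequality without a hypothesis on $d$. Here $|N_{\alpha}| \leq b^{4}d \leq 7^{4}d$, so $423b^{8/3}|N_{\alpha}|^{2}/\sqrt{d} \ll d^{3/2}$, while $y_{2} \gg u^{8}d^{4}$. This leaves finitely many small $(d,u,t)$ to check directly, or else handles all cases with an explicit inequality. The case $x_{k}=0$ excluded in Remark~\ref{rem:xk-0} is treated as in Section~\ref{sect:xk-0}.

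The main obstacle is the precise lower bound for $y_{K-1}$ in Step~1, since $\bar\alpha$ enters with the opposite sign, together with the bookkeeping of constants that yields exactly $37$ and the exponents $28/13$ and $24/13$. For this I would first try to reuse the corresponding lemma of \cite{V7} verbatim, with $17$ replaced by the new constant.
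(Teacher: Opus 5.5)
\textbf{There is a genuine gap: your reduction is logically sound but cannot produce the threshold in part~(b).} It is true that if every $y_k$ with $k\ge 2$ or $k\le K-1$ exceeds $423b^{8/3}|N_\alpha|^2/\sqrt d$, then Theorem~\ref{thm:1.2-seq-new} finishes the proof. The problem is the lower bound available for these terms. The usable one is Lemma~\ref{lem:Y-LB1}(b), namely $y_k\ge 0.99|N_\alpha|du^4/(4b^2)$ (which needs $d\ge 300/u^2$), and it is sharp up to a constant. For small $a$ one has $K=-1$, $|N_\alpha|\approx b^4d$ and $y_{\pm 2}\approx b^2\varepsilon^4/2\approx b^2u^4d^2/2$. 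Incidentally, your $u^8d^4$ in Step~1 is wrong: $y_2$ involves $\varepsilon^4$, not $\varepsilon^8$.

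Comparing this lower bound with $423b^{8/3}|N_\alpha|^2/\sqrt d$ requires $d^{3/2}u^4>1709\,b^{14/3}|N_\alpha|$, i.e.\ $d\gg b^{28/9}|N_\alpha|^{2/3}/u^{8/3}$. This does not follow from $d\ge 37b^{28/13}|N_\alpha|^{1/2}/u^{24/13}$. For example, with $b=8$, $u=1$ and $|N_\alpha|\approx b^4d$, the hypothesis of (b) only forces $d\gtrsim 4\cdot 10^{10}$, whereas your inequality needs $d\gtrsim 10^{21}$. The consolidated bound in Theorem~\ref{thm:1.2-seq-new} was formed by taking the worst exponent of each variable over four different cases. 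That loss cannot be recovered afterwards, and no ``optimising'' of $|N_\alpha|^{1/2}$ against $b$ fixes it.

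\textbf{The missing idea is to work inside the proof of Theorem~\ref{thm:1.2-seq-new}.} Play the lower bound \eqref{eq:ym1-LB} against each case's upper bound for $y_{m_1}$ separately. For cases (iii) and (iv), keep the $r_0$-dependent forms \eqref{eq:yk-UB-step3} and \eqref{eq:yk-UB-step4} rather than their simplified versions. This gives four separate lower bounds for $d$:
\begin{itemize}
\item the exponents $28/13$ and $24/13$ come from case~(ii), where $y_{m_1}<0.23b^{8/3}|N_\alpha|^{7/6}/d^{7/6}$;
\item the factor $|N_\alpha|^{1/2}$ comes from case~(iv), where $d>50b^2|N_\alpha|^{1/2}/u^2$;
\item the constant $37$ is $50/8^{2/13}$ rounded up, using $b\ge 8$, and has nothing to do with $423$.
\end{itemize}
The hypothesis $b\ge 8$ also absorbs $300/u^2$, and the conditions needed for \eqref{eq:C-2} are likewise absorbed into the final maximum.

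\textbf{Part~(a) also needs much more than ``checking directly''.} The paper removes the hypothesis on $d$ by substituting $|N_\alpha|<db^4$ into the same case bounds, which gives $d\le D_{b,u}$ (dominated by $2416b^8/u^4$). It then enumerates the tuples $(a,b,d,t,u)$ in that range, and for the $206$ surviving cases it solves $x^2-dy^4=N_\alpha$ completely. That last step is essential, because Theorem~\ref{thm:1.2-seq-new} says nothing about squares below its threshold, and one cannot inspect an infinite sequence directly. Indeed $(d,N_\alpha)=(42,-38)$ genuinely gives four solutions. With your cruder threshold the search range would be about $3\cdot 10^{6}\,b^{52/3}/u^{8}$, far beyond anything computable for $b=7$.
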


The following two corollaries are immediate from Theorem~\ref{thm:1.3-seq-new}.

\begin{corollary}
\label{cor:1.3-seq-new}
Let $a$, $b$ and $d$ be positive integers, where $d$ is not a square, $N_{\alpha}<0$
and $N_{\alpha}=-2^{\ell}p^{m}$ with $p$ an odd prime and $\ell$, $m$ non-negative
integers.

{\rm (a)} For $b=1,\ldots,7$, there are at most seven distinct squares among the
$y_{k}$'s.

{\rm (b)} For $b \geq 8$, there are at most seven distinct squares among the $y_{k}$'s,
provided that
\[
d \geq \frac{37b^{28/13} \left| N_{\alpha} \right|^{1/2}}{u^{24/13}}.
\]
\end{corollary}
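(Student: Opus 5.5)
Corollary~\ref{cor:1.3-seq-new} follows from Theorem~\ref{thm:1.3-seq-new} by counting the indices that theorem leaves out. The hypotheses are identical: the same conditions on $a$, $b$, $d$ and $N_{\alpha}$, and in part~(b) the same lower bound on $d$. So in each case Theorem~\ref{thm:1.3-seq-new} gives at most four distinct squares among the $y_{k}$ with $k \geq 2$ or $k \leq K-1$. It remains to bound the squares among the $y_{k}$ with $K \leq k \leq 1$.

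First, $y_{k}\geq 0$ is a linear combination of $\varepsilon^{2k}$ and $\bar{\varepsilon}^{2k}$ and is convex in $k$. Since $b^{2}$ is the smallest square among the $y_{k}$, the choice of $\alpha$ makes $y_{0}=b^{2}$ the minimum of the sequence (up to ties). By the definition of $K$ as the largest negative integer with $y_{K}>b^{2}$, we have $y_{k} \leq b^{2}$, hence $y_{k}=b^{2}$, for $K<k<0$.

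Now split the range $K \leq k \leq 1$ into three parts:
\begin{itemize}
\item the indices $K<k\leq 0$, which together contribute only the single distinct value $b^{2}$;
\item the index $k=1$, which contributes at most one square, $y_{1}$;
\item the index $k=K$, which contributes at most one square, $y_{K}$.
\end{itemize}
This gives at most three further distinct squares, and $4+3=7$, which proves both parts~(a) and~(b).

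The only point needing care is the claim that all $y_{k}$ with $K<k<0$ equal $b^{2}$. If the monotonicity/convexity argument were awkward to make precise, one can avoid it. The theorem covers every index except $K,K+1,\ldots,0,1$, and $K$ was chosen so that no $y_{k}$ with $K<k<0$ exceeds $b^{2}$. Any square among these is then at most $b^{2}$, so by minimality it equals $b^{2}$, and the count of distinct squares is unchanged. No analytic difficulty arises; this corollary is a bookkeeping consequence of Theorem~\ref{thm:1.3-seq-new}.
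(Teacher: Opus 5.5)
Your count is the paper's own argument; the paper calls the corollary immediate from Theorem~\ref{thm:1.3-seq-new}. The theorem gives at most four squares among the indices $k \geq 2$ or $k \leq K-1$. The remaining squares are $y_{K}$, $y_{1}$, and the single square $b^{2}$ from the indices $K<k\leq 0$, for a total of $4+3=7$.

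Your first justification for the middle range is wrong, though. Minimality of $b^{2}$ among the \emph{squares} does not make $y_{0}$ the minimum of the sequence. Also, $y_{k}\leq b^{2}$ does not give $y_{k}=b^{2}$. For example, take $d=2$, $t=u=2$, $a=5$, $b=2$, so $N_{\alpha}=-7$. The sequence runs $\ldots,8,2,4,22,\ldots$, so $K=-2$ and $y_{-1}=2\neq b^{2}$.

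What is true is your fallback statement. Any \emph{square} $y_{k}$ with $K<k<0$ satisfies $y_{k}\leq b^{2}$ by the definition of $K$, and so equals $b^{2}$ by minimality. That is all the count needs. Your first bullet should therefore say those indices contribute only the single square $b^{2}$, not the single value $b^{2}$.
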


\begin{corollary}
\label{cor:1.4-seq-new}
Let $a$, $b$ and $d$ be positive integers, where $d$ is not a square, $N_{\varepsilon}=1$,
$N_{\alpha}<0$ and $N_{\alpha}=-2^{\ell}p^{m}$ with $p$ an odd prime and $\ell$,
$m$ non-negative integers.

If the smallest squares among the even-indexed elements of $\left( y_{k}' \right)_{k \in \bbZ}$
and among the odd-indexed elements of $\left( y_{k}' \right)_{k \in \bbZ}$ defined
in \eqref{eq:ykPrime-defn} are both at most $49$, then there are
at most $14$ distinct squares among the $y_{k}'$'s.
\end{corollary}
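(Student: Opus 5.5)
We deduce Corollary~\ref{cor:1.4-seq-new} from Corollary~\ref{cor:1.3-seq-new}(a), applied separately to the even-indexed and the odd-indexed subsequences of $\left( y_{k}' \right)_{k \in \bbZ}$.

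\emph{Splitting into two sequences.} From \eqref{eq:ykPrime-defn}, the even-indexed terms satisfy $x_{2k}'+y_{2k}'\sqrt{d}=\alpha\varepsilon^{2k}$. This is exactly a sequence of the form \eqref{eq:yk-defn} with the same $\alpha$ and $\varepsilon$. The odd-indexed terms satisfy $x_{2k+1}'+y_{2k+1}'\sqrt{d}=(\alpha\varepsilon)\varepsilon^{2k}$, which is again of the form \eqref{eq:yk-defn}, now with $\alpha$ replaced by $\alpha'=\alpha\varepsilon$. Since $N_{\varepsilon}=1$, we have $N_{\alpha'}=N_{\alpha}N_{\varepsilon}=N_{\alpha}=-2^{\ell}p^{m}$. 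So the norm hypothesis of Corollary~\ref{cor:1.3-seq-new} holds for both subsequences. This is where the hypothesis $N_{\varepsilon}=1$ is used: if $N_{\varepsilon}=-1$, the odd subsequence would have norm $+2^{\ell}p^{m}>0$.

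\emph{Normalising each subsequence.} In each subsequence we re-index so that the smallest square is $y_{0}$. Concretely, if the smallest square among the odd-indexed terms is $y_{2j+1}'=b'^{2}$, we take the new $\alpha$ to be $\alpha\varepsilon^{2j+1}$, whose $\sqrt{d}$-coefficient is $b'^{2}$. We do the same for the even-indexed terms. Both new $\alpha$'s have norm $N_{\alpha}$.

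Corollary~\ref{cor:1.3-seq-new} requires the rational part $a$ of the new $\alpha$ to be a positive integer. If $a<0$, we replace $\alpha$ by its conjugate-type counterpart $-\bar\alpha$ (equivalently, reverse the index $k\mapsto -k$). This keeps the set of $y$-values unchanged up to re-indexing, and makes $a>0$. If $a=0$, that is the exceptional $x_{k}=0$ case of Remark~\ref{rem:xk-0}, which the paper treats separately in Section~\ref{sect:xk-0}; we assume the corollary as stated covers it.

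We also need $a$ and $b'^{2}$ to be integers rather than half-integers. This depends on the paper's conventions with $\varepsilon=(t+u\sqrt{d})/2$; presumably $y_{k}'$ is integral whenever it is a square, so this should be fine.

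\emph{Conclusion.} By hypothesis the minimal squares $b^{2}$ and $b'^{2}$ satisfy $b,b'\le 7$. Corollary~\ref{cor:1.3-seq-new}(a) therefore gives at most $7$ distinct squares among the even-indexed $y_{k}'$ and at most $7$ among the odd-indexed $y_{k}'$, hence at most $14$ in total.

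The only delicate point is the bookkeeping in the normalisation step: checking that the shifted, and possibly sign-adjusted, $\alpha$ satisfies all the standing assumptions of Subsection~\ref{subsect:notation}, namely $a>0$ and $b^{2}$ minimal. The rest is immediate.
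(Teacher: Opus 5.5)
Your proposal is correct and follows the paper's own argument. You split into even- and odd-indexed subsequences, use $N_{\varepsilon}=1$ to keep the norm of $\alpha\varepsilon^{2h+1}$ negative, pass to the negative conjugate when the rational part is negative, and apply Corollary~\ref{cor:1.3-seq-new}(a) to each half to get $7+7=14$. The zero-rational-part case needs no extra assumption: Section~\ref{sect:xk-0} gives at most three squares in that subsequence, which is how the paper closes it. The integrality of the new rational part is also automatic, since $2x'\in\bbZ$ and $x'^{2}=N_{\alpha}+dy'^{2}\in\bbZ$.
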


We require $N_{\varepsilon}=1$ here because, for the odd-indexed elements, we will
apply Corollary~\ref{cor:1.3-seq-new} to the sequences formed from
$\left( \alpha \varepsilon^{2h+1} \right) \varepsilon^{2k}$, where $2h+1$
is the index of the smallest odd-indexed square in the sequence.
But that is only possible if $N \left( \alpha \varepsilon^{2h+1} \right)<0$. Since
$N_{\alpha}<0$, this requires $N_{\varepsilon}=1$.

If the rational part of $\alpha \varepsilon^{2h+1}$ is negative, then the equivalent
of the condition $a>0$ at the start of Subsection~\ref{subsect:notation} does not hold.
In this case, replace $\alpha\varepsilon^{2h+1}$ by its negative conjugate. This
has positive rational part and its only effect on the $y$-values is to change the
sign of their index.

If the rational part of $\alpha\varepsilon^{2h+1}$ is zero, then Section~\ref{sect:xk-0}
shows that there are at most three distinct squares among the odd-indexed elements.
Otherwise, after replacing $\alpha\varepsilon^{2h+1}$ by its negative conjugate
if necessary, Corollary~\ref{cor:1.3-seq-new}(a) applies and gives at most seven
such squares.

As in \cite{V6}, we can use Lemma~\lemFamilies{} in \cite{V6} to establish generalisations
of Corollary~\corMainPP{} there, upper bounds on the number of coprime positive
integer solutions of $X^{2}-dY^{4}=N_{\alpha}$, where $N_{\alpha}=-2^{\ell}p^{m}$.

The code used in this work is publicly available at
\url{https://github.com/PV-314/hypgeom/any-b/Na-PP}. The author is very
happy to help interested readers who have any questions, problems or
suggestions for the use of this code.

\section{Diophantine Approximation via Hypergeometric Functions}
\label{sect:dio}

For completeness and ease for the reader, we collect here the notation and
statements from Section~2 of \cite{V7} that we will need in our proof.

In the published version of this work, we had Lemma~\lemDio{} from \cite{V7}.
However, as we state in the next subsection, this lemma is not always applicable
here. Instead we need Lemma~\lemDio{} from \cite{V8}, which we quote here.

\begin{lemma}
\label{lem:2.1}
Let $\theta \in \bbC$ and let $\bbK$ be an imaginary quadratic field. Suppose that
there exist $k_{0},\ell_{0} > 0$ and $E,Q > 1$ such that for all non-negative
integers $r$, there are algebraic integers $p_{r}$ and $q_{r}$ in $\bbK(g')$,
where $0 \neq {g'}^{2} \in \cO_{\bbK}$ with ${g'}^{r}p_{r}, {g'}^{r}q_{r} \in \cO_{\bbK}$,
$\left| q_{r} \right| < k_{0}Q^{r}$ and
$\left| q_{r} \theta - p_{r} \right| \leq \ell_{0}E^{-r}$ satisfying 
$p_{r}q_{r+1} \neq p_{r+1}q_{r}$.

For any algebraic integers $p$ and $q$ in $\bbK$, not both zero, let $r_{0}$ be the smallest
positive integer such that $\left( Q-1/E  \right)\ell_{0}|q|/\left( Q-1 \right)<cE^{r_{0}}$,
where $0<c<1$.

\noindent
{\rm (a)} We have 
\[
\left| q\theta - p \right| 
> \frac{1-c/E}{k_{0}Q^{r_{0}+1}}.
\]

\noindent
{\rm (b)} When $pq_{r_{0}} \neq qp_{r_{0}}$, we have
\[
\left| q\theta - p \right| 
> \frac{1-c}{k_{0}Q^{r_{0}}}.
\]
\end{lemma}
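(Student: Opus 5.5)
The plan is the classical argument for converting a sequence of good approximations into an effective irrationality measure. It is adapted to allow $p_r,q_r$ in $\bbK(g')$ rather than in $\cO_{\bbK}$. Fix $p,q$ as in the statement and let $r_0$ be as defined. The key observation is that, although $p_r$ and $q_r$ need not be algebraic integers of $\bbK$, the quantity $pq_r-qp_r$ becomes an element of $\cO_{\bbK}$ after multiplication by ${g'}^{r}$. By hypothesis ${g'}^{r}p_r,{g'}^{r}q_r\in\cO_{\bbK}$, and $p,q\in\cO_{\bbK}$. Since $\bbK$ is imaginary quadratic, every nonzero element of $\cO_{\bbK}$ has absolute value at least $1$. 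However ${g'}^{r}$ itself has absolute value $|g'|^{r}$, so this alone gives only $|pq_r-qp_r|\geq |g'|^{-r}$, which is too weak. I would therefore first check how the hypotheses are meant to be used; presumably $|g'|$ is absorbed into $Q$ and $E$, or the lemma intends $|pq_r-qp_r|\geq 1$ whenever it is nonzero. I would follow \cite{V8} and take the nonvanishing quantity to satisfy $|pq_r-qp_r|\geq 1$, checking this integrality carefully since it is the delicate point of the generalisation.

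For part (b), suppose $pq_{r_0}\neq qp_{r_0}$. Write
\[
pq_{r_0}-qp_{r_0}=q_{r_0}(p-q\theta)+q\left(q_{r_0}\theta-p_{r_0}\right),
\]
so that
\[
1\leq |pq_{r_0}-qp_{r_0}|\leq k_0Q^{r_0}|q\theta-p|+|q|\ell_0E^{-r_0}.
\]
The definition of $r_0$ gives $|q|\ell_0E^{-r_0}<c(Q-1)/(Q-1/E)\leq c$. Rearranging yields $|q\theta-p|>(1-c)/(k_0Q^{r_0})$.

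For part (a), the nonvanishing condition $p_rq_{r+1}\neq p_{r+1}q_r$ implies that $pq_r-qp_r$ and $pq_{r+1}-qp_{r+1}$ cannot both vanish. This is because $(p,q)\neq(0,0)$ cannot be proportional to both $(p_r,q_r)$ and $(p_{r+1},q_{r+1})$. Hence either $r=r_0$ or $r=r_0+1$ is nondegenerate. If $r_0$ works, part (b) already gives a stronger bound. Otherwise apply the same estimate at $r_0+1$:
\[
1\leq k_0Q^{r_0+1}|q\theta-p|+|q|\ell_0E^{-r_0-1}.
\]
Here $|q|\ell_0E^{-r_0-1}<c/E$, which gives $|q\theta-p|>(1-c/E)/(k_0Q^{r_0+1})$.

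Finally, since $(1-c)/Q^{r_0}\geq(1-c/E)/Q^{r_0+1}$, both cases give (a). The factor $(Q-1/E)/(Q-1)$ in the definition of $r_0$ only makes the condition stronger, so it is harmless here. It is presumably needed in \cite{V8} for a sharper treatment, for instance one that uses the smaller of the two errors.

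The main obstacle is the integrality step: justifying that a nonzero $pq_r-qp_r$ has absolute value at least $1$ when $p_r,q_r$ lie only in $\bbK(g')$. This is where I would spend the effort, for example by taking norms from $\bbK(g')$ down to $\bbK$.
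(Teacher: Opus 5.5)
There are two genuine gaps: the integrality step is left unproved, and the deduction of (a) uses a false inequality.

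\textbf{The integrality step.} You leave it open, although both parts rest on it. Your suggestion that $|g'|$ might be absorbed into $Q$ and $E$ points the wrong way: the bound $|pq_r-qp_r|\geq 1$ holds exactly. The missing idea is that integrality comes from the hypothesis that $p_r$ and $q_r$ are themselves algebraic integers. The condition ${g'}^{r}p_r,{g'}^{r}q_r\in\cO_{\bbK}$ is only used to locate $z=pq_r-qp_r$ in a field.

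Indeed, $z$ is an algebraic integer. Since ${g'}^{r}z\in\bbK$ and ${g'}^{2}\in\bbK^{\times}$, we have $z\in\bbK$ if $r$ is even, and $g'z\in\bbK$, i.e.\ $z\in g'\bbK$, if $r$ is odd. In either case $z^{2}\in\bbK$, so $z^{2}\in\cO_{\bbK}$. For $z\neq 0$ this gives
\[
|z|^{4}=\left|z^{2}\right|^{2}=N_{\bbK/\bbQ}\left(z^{2}\right)\geq 1,
\]
because $\bbK$ is imaginary quadratic. Taking the norm from $\bbK(g')$ to $\bbK$, as you propose, only helps in combination with this observation. For a general element of $\bbK(g')$, the conjugate over $\bbK$ need not have the same absolute value; here $z\in g'\bbK$ forces that conjugate to be $\pm z$.

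\textbf{Part (a) when $pq_{r_0}\neq qp_{r_0}$.} Your deduction here is wrong. The inequality $(1-c)/Q^{r_0}\geq(1-c/E)/Q^{r_0+1}$, i.e.\ $(1-c)Q\geq 1-c/E$, is false in general: $Q=E=2$ and $c=3/4$ give $1/2<5/8$. Handling this case is exactly what the factor $(Q-1/E)/(Q-1)$ in the definition of $r_0$ is for, so it is not harmless.

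Put $x=|q|\ell_0E^{-r_0}$. The definition of $r_0$ says $x<c(Q-1)/(Q-1/E)$. In the nondegenerate case at $r_0$, do not replace $x$ by $c$; keep $|q\theta-p|>(1-x)/(k_0Q^{r_0})$. Then
\[
Q(1-x)-\left(1-\frac{x}{E}\right)=(Q-1)-x\left(Q-\frac{1}{E}\right)>(1-c)(Q-1)>0,
\]
so
\[
|q\theta-p|>\frac{1-x/E}{k_0Q^{r_0+1}}>\frac{1-c/E}{k_0Q^{r_0+1}}.
\]
The degenerate case at $r_0$ is handled by your argument at $r_0+1$.

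With these two repairs, the rest of your outline is correct: the identity for $pq_{r}-qp_{r}$, the fact that $r_0$ and $r_0+1$ cannot both be degenerate, and the proof of (b).
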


\subsection{Construction of Approximations}
\label{subsect:const}

Let $t'$, $u_{1}$ and $u_{2}$ be rational integers with $t'<0$ and $u_{2} \neq 0$
such that $U=\left( u_{1}+u_{2}\sqrt{t'} \right)/2$ is an algebraic integer in
$\bbK=\bbQ \left( \sqrt{t'} \right)$ with $\sigma(U)=\left( u_{1}-u_{2}\sqrt{t'} \right)/2$
as its algebraic (and complex) conjugate. Put $\omega = U/\sigma(U)$ and write
$\omega=e^{i\varphi}$, where $-\pi<\varphi \leq \pi$. For any real number
$\nu$, we shall put $\omega^{\nu}= e^{i\nu\varphi}$ -- unless otherwise stated,
we will use this convention throughout this paper.

Suppose that $\alpha$, $\beta$ and $\gamma$ are complex numbers and $\gamma$ is
not a non-positive integer. We denote by ${}_{2}F_{1}(\alpha, \beta; \gamma; z)$
the classical (or Gauss) hypergeometric function of the complex variable $z$.
For integers $m$ and $n$ with $0 < m < n/2$, $\gcd(m,n) = 1$ and $r$ a non-negative
integer, put $\nu=m/n$ and
\[
X_{m,n,r}(z)={}_{2}F_{1}(-r-\nu, -r; 1-\nu; z), \quad
Y_{m,n,r}(z)=z^{r}X_{m,n,r} \left(z^{-1} \right)
\]
and
\[
R_{m,n,r}(z)
= (z-1)^{2r+1} \frac{\nu \cdots (r+\nu)}{(r+1) \cdots (2r+1)} 
		   {} _{2}F_{1} \left( r+1-\nu, r+1; 2r+2; 1-z \right).
\]

We let $D_{n,r}$ denote the smallest positive integer such that $D_{n,r} X_{m,n,r}(x) \in \bbZ[x]$
for all $m$ as above. For $d' \in \bbZ$ with $d' \neq 0$, we define $N_{d',n,r}$ to be
the largest integer such that $\left( D_{n,r}/ N_{d',n,r} \right)X_{m,n,r}\left( 1-\sqrt{d'}\,x \right)
\in \bbZ \left[ \sqrt{\core \left( d' \right)} \right] [x]$, again for all $m$ as above. We will use
$v_{p}(x)$ to denote the exponent of the largest power of a prime $p$ which divides
the rational number $x$. We put
\begin{equation}
\label{eq:ndn-defn}
\cN_{d',n} =\prod_{p|n} p^{\min(v_{p}(d')/2, v_{p}(n)+1/(p-1))}.
\end{equation}

In what follows, we shall restrict our attention to $m=1$ and $n=4$, so $\nu=1/4$.

As in equation~(\eqGDefn{}) in \cite{V5}, put
\begin{align}
\label{eq:g-defn}
g_{1}  & = \gcd \left( u_{1}, u_{2} \right), \\
g_{2}  & = \gcd \left( u_{1}/g_{1}, t' \right), \nonumber \\
g_{3}  & = \left\{
             \begin{array}{ll}
	             1 & \text{if $t' \equiv 1 \pmod{4}$ and $\left( u_{1}-u_{2} \right)/g_{1} \equiv 0 \pmod{2}$}, \\
	             2 & \text{if $t' \equiv 3 \pmod{4}$ and $\left( u_{1}-u_{2} \right)/g_{1} \equiv 0 \pmod{2}$},\\
	             4 & \text{otherwise,}
             \end{array}
             \right. \nonumber \\
g      & = g_{1}\sqrt{g_{2}/g_{3}}. \nonumber
\end{align}

Then we can put
\begin{align}
\label{eq:7}
p_{r} &= \frac{D_{4,r}}{N_{d',4,r}} \left( \frac{u_{1}-u_{2}\sqrt{t'}}{2g} \right)^{r} X_{1,4,r}(\omega),\\
q_{r} &= \frac{D_{4,r}}{N_{d',4,r}} \left( \frac{u_{1}-u_{2}\sqrt{t'}}{2g} \right)^{r} Y_{1,4,r}(\omega)
\hspace*{3.0mm} \text{ and} \nonumber \\
R_{r} &= \frac{D_{4,r}}{N_{d',4,r}} \left( \frac{u_{1}-u_{2}\sqrt{t'}}{2g} \right)^{r} R_{1,4,r}(\omega),
\nonumber
\end{align}
where
\begin{equation}
\label{eq:d-defn}
d'=\left( U-\sigma(U) \right)^{2}/g^{2} = u_{2}^{2}t'/g^{2}.
\end{equation}

As in Subsection~\subSectConstr{}\, of \cite{V5} (as corrected in the arXiv link
in \cite{V5}), $p_{r}$ and $q_{r}$ are algebraic integers. For $r$ even, they
are in $\bbQ \left( \sqrt{t'} \right)$. For $r$ odd, they are in
$\bbQ \left( \sqrt{t'}, \sqrt{\core \left( g_{2}g_{3} \right)} \right)$ with
$\sqrt{\core \left( g_{2}g_{3} \right)} \, p_{r}$ and $\sqrt{\core \left( g_{2}g_{3} \right)} \, q_{r}$
in $\bbQ \left( \sqrt{t'} \right)$.
So we will apply Lemma~\ref{lem:2.1} with $g'=\sqrt{\core \left( g_{2}g_{3} \right)}$.
Also
\[
q_{r}\omega^{1/4} - p_{r}=R_{r}.
\]

As in equations~(\eqQDefn{}), (\eqKUB{}), (\eqEDefn{}) in \cite{V5} and the
expression for $\ell_{0}$ immediately after them there, we have
\begin{align}
\label{eq:q-defn}
Q &= \frac{\cD_{4} \left| \left| u_{1} \right| + \sqrt{u_{1}^{2}-t'u_{2}^{2}} \right|}{|g|\cN_{d',4}},\\
\label{eq:k-UB}
k_{0} &<1.257, \\
\label{eq:e-defn}
E &= \frac{|g|\cN_{d',4} \left|  \left| u_{1} \right|  + \sqrt{u_{1}^{2}-t'u_{2}^{2}} \right|}{\cD_{4}u_{2}^{2}|t'|}, \\
\label{eq:ell0-defn}
\ell_{0} &= \cC_{4,2}|\varphi|=0.2685|\varphi|, \text{ where} \\
\label{eq:d4-defn}
\cD_{4} &= e^{1.68}.
\end{align}

\section{Lemmas about $\left( x_{k} \right)_{k \in \bbZ}$ and $\left( y_{k} \right)_{k \in \bbZ}$}
\label{sect:prelim}

We start by stating Proposition~\propRep{}(c) of \cite{V5} when $\epsilon$ is
a unit in $\bbQ \left( \sqrt{d} \right)$. This is the result that permits us to
use the hypergeometric method.

\begin{lemma}
\label{lem:quad-rep}
Let $a \neq 0$, $b>0$ and $d$ be rational integers such that $d$ is not a square.
Put $\alpha=a+b^{2} \sqrt{d}$ and denote $N_{\bbQ(\sqrt{d})/\bbQ}(\alpha)$
by $N_{\alpha}$. Suppose that $N_{\alpha}$ is not a square, $x \neq 0$ and $y>0$
are rational integers with
\begin{equation}
\label{eq:quad-rep-assumption}
x+y^{2} \sqrt{d} = \alpha \epsilon^{2},
\end{equation}
where $\epsilon=\left( t+u\sqrt{d} \right)/2 \in \cO_{\bbQ \left( \sqrt{d} \right)}$
with $t$ and $u$ non-zero rational integers, norm $N_{\epsilon}=\pm 1$.

If $\core \left( \left| N_{\alpha} \right| \right) = 2^{\ell}p^{m}$ where $\ell, m \in \{ 0, 1 \}$
with $\ell+m \geq 1$ and $p$ is an odd prime, then we can write
\begin{align}
f^{2} \left( x + N_{\epsilon}\sqrt{N_{\alpha}} \right)
&= \left( a+\sqrt{N_{\alpha}} \right) \left( r + s\sqrt{\core \left( N_{\alpha} \right)} \right)^{4}
\quad \text{and} \nonumber \\
\label{eq:fy-rel}
fy &= b \left( r^{2}-\core \left( N_{\alpha} \right)s^{2} \right),
\end{align}
for some integers $f$, $r$ and $s$ satisfying $f \neq 0$,
$f \mid \left( 4b^{2} \right)$ when $N_{\alpha} \equiv 1 \pmod{4}$
and $4|d$, and $f \mid \left( 2b^{2} \right)$ otherwise.
\end{lemma}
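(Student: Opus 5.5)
We work in the quadratic order $\bbZ\bigl[\sqrt{\core(N_{\alpha})}\bigr]$, with $\bbQ\bigl(\sqrt{N_{\alpha}}\bigr)=\bbQ\bigl(\sqrt{\core(N_{\alpha})}\bigr)$ imaginary quadratic since $N_{\alpha}<0$. The starting identity comes from \eqref{eq:quad-rep-assumption} and its conjugate. Multiplying them gives $x^{2}-dy^{4}=N_{\alpha}$. Hence
\[
\bigl(x+\sqrt{N_{\alpha}}\bigr)\bigl(x-\sqrt{N_{\alpha}}\bigr)=dy^{4},
\qquad
\bigl(a+\sqrt{N_{\alpha}}\bigr)\bigl(a-\sqrt{N_{\alpha}}\bigr)=db^{4}.
\]
Write $\epsilon^{2}=\bigl(T+U\sqrt{d}\bigr)/2$ and expand $\alpha\epsilon^{2}$. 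Comparing $\sqrt{d}$-parts gives $y^{2}=\bigl(b^{2}T+aU\bigr)/2$. A direct computation then shows that
\[
\bigl(x+N_{\epsilon}\sqrt{N_{\alpha}}\bigr)\bigl(a-\sqrt{N_{\alpha}}\bigr)
\]
equals $d$ times a square of an element of $\cO_{\bbQ(\sqrt{N_{\alpha}})}$, up to bounded denominators. This is the standard "quadratic rep" argument from Proposition~\propRep{} of \cite{V5}.

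The cleaner route is to reduce to the quartic form directly. From $x^{2}-N_{\alpha}=dy^{4}$ and $a^{2}-N_{\alpha}=db^{4}$ we get, in $\bbK=\bbQ\bigl(\sqrt{N_{\alpha}}\bigr)$,
\[
\frac{x+N_{\epsilon}\sqrt{N_{\alpha}}}{a+\sqrt{N_{\alpha}}}\cdot\frac{x-N_{\epsilon}\sqrt{N_{\alpha}}}{a-\sqrt{N_{\alpha}}}=\frac{y^{4}}{b^{4}}.
\]
We then show that $\gamma=\bigl(x+N_{\epsilon}\sqrt{N_{\alpha}}\bigr)/\bigl(a+\sqrt{N_{\alpha}}\bigr)$ is a fourth power in $\bbK$. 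The plan is to first show it is a square, and then a square of an element whose norm is a square.

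\textbf{Key steps.} First, use \eqref{eq:quad-rep-assumption} to express $\gamma$ explicitly in terms of $t,u$. Set $\rho=\bigl(b^{2}t\sqrt{d}+\dots\bigr)$, and verify that $\gamma\cdot(\text{square})$ is expressible as $\beta^{2}$ with $\beta=\bigl(X'+Y'\sqrt{N_{\alpha}}\bigr)/(\text{small})$ and $N(\beta)=y^{2}/b^{2}$ up to the factor $f$. Second, factor ideals in $\cO_{\bbK}$. Since $N(\beta)$ is a rational square, $(\beta)=\mathfrak{a}^{2}\mathfrak{c}$ with $\mathfrak{c}$ supported on primes dividing $\gcd$'s of $\beta$ and $\bar\beta$. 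These primes divide $2N_{\alpha}$ and $b$.

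Third, use the hypothesis $\core\bigl(|N_{\alpha}|\bigr)=2^{\ell}p^{m}$ with $\ell,m\in\{0,1\}$. The only ramified primes of $\bbK$ lie over $2$ and $p$, and their squares are principal, generated by $2$ or by $\sqrt{-p}$-type elements. This lets us absorb the non-square part and write $\beta=\pm\bigl(r+s\sqrt{\core(N_{\alpha})}\bigr)^{2}/f$. We need $\mathfrak{a}$ to be principal. We get this from $\mathfrak{a}^{2}$ being principal together with genus theory: for discriminant divisible only by $2$ and $p$, the $2$-torsion of the class group is generated by the ramified primes. Fourth, take norms to get $fy=b\bigl(r^{2}-\core(N_{\alpha})s^{2}\bigr)$, adjusting signs. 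Finally, track the denominators from $\epsilon$ having half-integer coordinates and from $\gcd(a,b^{2}d)$ to obtain $f\mid 2b^{2}$, or $f\mid 4b^{2}$ when $N_{\alpha}\equiv1\pmod4$ and $4\mid d$.

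\textbf{Main obstacle.} The principal-ideal step is the hardest. We need that the ambiguous ideal classes, i.e. those of order $2$, are exactly the ramified ones, so that $\mathfrak{a}$ can be made principal after multiplying by ramified primes. The hypothesis on $\core\bigl(|N_{\alpha}|\bigr)$ is what makes this work: by genus theory the $2$-rank of the class group is (number of ramified primes) $-1\le1$, and that class is represented by a ramified prime. Units cause no trouble, since only $\pm1$ occur apart from $\bbQ(i)$ and $\bbQ(\sqrt{-3})$, where extra units are fourth or sixth roots of unity that can be absorbed. The bookkeeping for $f$ is the other delicate point, which I would handle by a case analysis modulo $4$ as in \cite{V5}.
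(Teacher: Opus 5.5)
There is a genuine gap. The only assertion of the lemma that uses the hypothesis $\core(|N_{\alpha}|)=2^{\ell}p^{m}$ is that $f$ is an integer with $f\mid 2b^{2}$ (or $f\mid 4b^{2}$ when $N_{\alpha}\equiv 1\pmod{4}$ and $4\mid d$). That is exactly what you postpone as bookkeeping ``as in \cite{V5}'', and it is the substance of Proposition~\propRep{}(c) of \cite{V5}, which the paper simply quotes.

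What you do sketch holds for every non-square $N_{\alpha}$ and needs no class groups. Put $X=b^{2}t+au$ and $\mu=X-u\sqrt{N_{\alpha}}$. Comparing coordinates, using $t^{2}-du^{2}=4N_{\epsilon}$ so that $x=aN_{\epsilon}+du(b^{2}t+au)/2$, gives
\[
b^{4}\bigl(x+N_{\epsilon}\sqrt{N_{\alpha}}\bigr)=\bigl(a+\sqrt{N_{\alpha}}\bigr)\left(\frac{\mu}{2}\right)^{2},
\qquad
\mu\overline{\mu}=X^{2}-N_{\alpha}u^{2}=(2by)^{2}.
\]
Any $\mu$ with $\mu\overline{\mu}=z^{2}$, $z\in\bbQ^{\times}$, equals $(z+\mu)^{2}/(2z+\Tr\mu)$ (use $-z$ if this denominator vanishes). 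That already gives both displayed relations with some $f\in\bbQ^{\times}$.

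So your genus-theory step is superfluous. The hypothesis does no work in it, since in every imaginary quadratic field the $2$-torsion of the class group is generated by classes of ramified primes. It also relies on $N_{\alpha}<0$, which the lemma does not assume. Finally, $\gamma$ is not a fourth power in $\bbK$, only $f^{-2}$ times one.

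What is missing is the control of $f$. Write $N_{\alpha}=k^{2}\core(N_{\alpha})$ and $h=\gcd(X+2by,X-2by)$. From $(X+2by)(X-2by)=N_{\alpha}u^{2}$ you get $X+2by=he_{1}m^{2}$ and $X-2by=he_{2}n^{2}$, with $e_{1}e_{2}=\core(N_{\alpha})$ and $ku=\pm hmn$. Taking $z=\pm 2by$ above and removing the content of $z+\mu$ gives $f=\pm 4b^{2}e_{1}/h$ or $f=\pm 4b^{2}e_{2}/h$, and every admissible $f$ is one of these times a rational square. So you must show that, for a suitable $i$, the number $2b^{2}/f=\pm h/(2e_{i})$ is an integer dividing $2b^{2}$, after rescaling $r,s$ by a common integer if necessary.

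This is where the hypothesis enters: since $\core(N_{\alpha})$ involves only $2$ and $p$, one of $e_{1},e_{2}$ lies in $\{\pm1,\pm2\}$. You must then show that the odd part of $h$ comes from $b$ with the right multiplicity; an odd prime dividing $h$ but not $b$ would divide $x$ and $y$, hence $\alpha$, hence $b$. You also need a $2$-adic analysis, using $(X+2by)-(X-2by)=4by$ and the parities of $a,t,u,d$; this is where the exceptional case $N_{\alpha}\equiv 1\pmod{4}$, $4\mid d$ comes from.

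The choice of $i$ genuinely matters. For $d=42$, $\alpha=2+\sqrt{42}$, $\epsilon=13+2\sqrt{42}$ and $(x,y)=(2858,21)$, one has $h=4$ and $(e_{1},e_{2})=(19,-2)$. The two choices give $f=\pm19$ and $f=\pm2$, and only the latter divides $2b^{2}=2$. None of this appears in your proposal, so as written it does not even establish $f\in\bbZ$.
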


We will also need lower bounds for the elements in our sequences.
Recall that $K$ is the largest negative integer such that $y_{K}>b^{2}$.

\begin{lemma}
\label{lem:Y-LB1}
Let the $y_{k}$'s be defined by $\eqref{eq:yk-defn}$ with the notation and assumptions
there. Suppose that $N_{\alpha}<0$.
 
\noindent
{\rm (a)}
For all $k$, $2y_{k}$ is a positive integer.
The sequences $\left( y_{k} \right)_{k \geq 0}$ and 
$\left( y_{K+1}, y_{K}, y_{K-1}, y_{K-2}, \ldots \right)$ are increasing sequences of positive numbers.

\noindent
{\rm (b)}
 For $N_{\varepsilon}=1$ or $du^{2} \geq 300$, we have
\begin{equation}
\label{eq:yLB-gen}
y_{k} \geq
\left\{
\begin{array}{ll}
\left( \left| N_{\alpha} \right|u^{2} / \left( 4b^{2} \right) \right) \left( 0.99du^{2} \right)^{k-1} & \text{for $k>0$,} \\
\left( \left| N_{\alpha} \right|u^{2} / \left( 4b^{2} \right) \right) \left( 0.99du^{2} \right)^{\max(0,K-k)} & \text{for $k<0$.}
\end{array}
\right.
\end{equation}

For any $d>1$, we can replace $0.99$ by $0.4$.
\end{lemma}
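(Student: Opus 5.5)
The plan is to work directly from the explicit formulas \eqref{eq:yk-defn}, \eqref{eq:yPM1} and the recurrence \eqref{eq:yk-recurrence}, using $\varepsilon^{2}=\left( (t^{2}+du^{2})/2 + tu\sqrt{d} \right)/2$.

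\emph{Part~(a).} First, $2y_{k}\in\bbZ$ and $x_{k}$ has the same denominator. Write $\varepsilon^{2}=(T+V\sqrt{d})/2$ with $T=(t^{2}+du^{2})/2$ and $V=tu$; since $\varepsilon^{2}\in\cO$, both $T$ and $V$ are integers. Then $x_{\pm 1}+y_{\pm 1}\sqrt{d}=\alpha\varepsilon^{\pm 2}$ gives $2y_{\pm 1}=b^{2}T\pm aV\in\bbZ$, and similarly $2x_{\pm1}\in\bbZ$. The recurrence has integer trace $T$, so induction from $(2y_{0},2y_{\pm1})$ gives $2y_{k}\in\bbZ$ for all $k$.

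Next, positivity and monotonicity. Write $\alpha\varepsilon^{2k}$ and its conjugate, and use $N_{\alpha}<0$. This forces $\alpha$ and $\bar\alpha$ to have opposite signs; with $a,b>0$ we have $\alpha>0>\bar\alpha$. Hence
\[
y_{k}=\left(\alpha\varepsilon^{2k}-\bar\alpha\,\bar\varepsilon^{2k}\right)/(2\sqrt{d})=\left(\alpha\varepsilon^{2k}+|\bar\alpha|\varepsilon^{-2k}\right)/(2\sqrt{d}),
\]
using $\bar\varepsilon^{2}=\varepsilon^{-2}$ (norm $\pm1$ squared is $1$). This is a positive convex function of $k$ of the form $A\lambda^{k}+B\lambda^{-k}$ with $A,B>0$ and $\lambda=\varepsilon^{2}>1$. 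Such a function decreases and then increases, with a unique real minimum. By the choice of $\alpha$ ($b^{2}$ is the smallest square and $y_{0}=b^{2}$) together with the definition of $K$, one gets $y_{0}\le y_{1}$, so $(y_{k})_{k\ge0}$ increases. For $k\le K+1$, $K$ is the last negative index with $y_{K}>b^{2}$. So $y_{K+1}\le b^{2}\le y_{0}$ when $K+1<0$, or $K+1=0$. Convexity then gives $y_{K+1}<y_{K}<y_{K-1}<\cdots$.

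\emph{Part~(b).} The key identity is the lower bound for $y_{1}$ (and for $y_{K}$). From \eqref{eq:yPM1},
\[
y_{1}y_{-1}=\frac{b^{4}(t^{2}+du^{2})^{2}-4a^{2}t^{2}u^{2}}{16}.
\]
Substitute $a^{2}=N_{\alpha}+b^{4}d$ and $(t^{2}+du^{2})^{2}-4dt^{2}u^{2}=(t^{2}-du^{2})^{2}=16$:
\[
y_{1}y_{-1}=b^{4}+|N_{\alpha}|t^{2}u^{2}/4.
\]
Since $y_{-1}\le y_{1}$ by the choice $a>0$, we get $y_{1}^{2}\ge y_{1}y_{-1}$. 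A sharper bound comes from writing
\[
y_{1}\ge \frac{y_{1}y_{-1}}{y_{1}} \quad\text{and}\quad y_{1}\le \frac{b^{2}(t^{2}+du^{2})}{2}.
\]
Combining these with $t^{2}=du^{2}\pm4$ gives
\[
y_{1}\ge \frac{|N_{\alpha}|t^{2}u^{2}/4}{b^{2}(t^{2}+du^{2})/2}\approx \frac{|N_{\alpha}|u^{2}}{4b^{2}},
\]
up to the factor $t^{2}/((t^{2}+du^{2})/2)$. That factor is $\ge1$ when $N_{\varepsilon}=1$ and is $\ge 0.99$ (or $\ge0.4$ in general) otherwise.

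Once the bound for $y_{1}$ holds, the recurrence gives
\[
y_{k+1}=Ty_{k}-y_{k-1}\ge (T-1)y_{k}
\]
on the increasing range. Also $T-1=(t^{2}+du^{2})/2-1\ge 0.99\,du^{2}$ when $N_{\varepsilon}=1$ or $du^{2}\ge300$, and $\ge0.4\,du^{2}$ for all $d>1$ after checking the smallest cases of $t,u,d$. Induction then gives the case $k>0$. For $k<0$, apply the same argument to the conjugate-reflected sequence starting at index $K+1$, i.e. replace $\alpha$ by $-\bar\alpha\varepsilon^{2(K+1)}$. This gives $y_{K}$ the same lower bound and the geometric growth for $k<K$.

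The main obstacle is the constant bookkeeping in the case $N_{\varepsilon}=-1$ with small $du^{2}$, where $t^{2}=du^{2}-4$ makes $t^{2}/T$ and $(T-1)/(du^{2})$ noticeably smaller than $1$. The first thing to try there is to enumerate the finitely many $(t,u)$ with $du^{2}<300$ to confirm the constant $0.4$.
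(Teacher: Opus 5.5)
\textbf{There is a genuine gap in the base case of part~(b).} The inductive step is fine, but the base case loses a constant factor that the statement does not allow.

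Where the exponent is $0$, the statement has no slack. It asserts $y_1\ge |N_\alpha|u^2/(4b^2)$, and $y_k\ge |N_\alpha|u^2/(4b^2)$ for every $K\le k\le -1$.

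Your estimate uses $y_1y_{-1}=b^4+|N_\alpha|t^2u^2/4$ together with $y_1<b^2T$. This only gives $(t^2/T)\,|N_\alpha|u^2/(4b^2)$. When $N_\varepsilon=-1$ we have $t^2/T=(du^2-4)/(du^2-2)<1$, so the claimed bound at $k=1$ (and at $k=K$ via your reflection) is not obtained.

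The loss cannot be absorbed in the induction either. For $N_\varepsilon=-1$ we have $T-1=du^2-3$, so the constants $0.99=1-3/300$ and $0.4=1-3/5$ are exactly what $y_{k+1}\ge (T-1)y_k$ gives at the thresholds $du^2\ge 300$ and $du^2\ge 5$. The latter threshold is automatic when $N_\varepsilon=-1$. There is no room left for a lossy base case.

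Your claim that the factor is $\ge 0.4$ in general is also false: $d=5$, $t=u=1$ gives $t^2/T=1/3$. So enumerating small $(t,u)$ cannot rescue the argument.

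You also never treat the indices $K<k<0$. This range is nonempty when $K\le -2$, and the statement covers it.

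A smaller point in~(a): $y_0<y_1$ does not follow from $b^2$ being the smallest square, since $y_1$ need not be a square. It follows from $a>0$, for example $y_1-y_0=(b^2(T-2)+atu)/2>0$ because $T\ge 3$.

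\textbf{The missing idea is an exact identity in place of the ratio estimate.} Use $y_{k+1}=(Ty_k+tux_k)/2$, $2T=t^2+du^2$ and $x_k^2-dy_k^2=N_\alpha$. For every $k$ one gets
\[
4y_ky_{k+1}=\left|N_{\alpha}\right|u^{2}+\left(ty_k+ux_k\right)^{2}.
\]
At $k=0$ this gives $y_1\ge |N_\alpha|u^2/(4b^2)$ with no loss.

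For $K\le k\le -1$, the definition of $K$ gives $y_{k+1}\le b^2$. Hence $y_k\ge |N_\alpha|u^2/(4y_{k+1})\ge |N_\alpha|u^2/(4b^2)$. This handles $y_K$ and the omitted range together, with no reflection needed.

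Your induction then finishes the proof:
\begin{itemize}
\item $y_{k+1}\ge (T-1)y_k$ for $k\ge 1$, and $y_{k-1}\ge (T-1)y_k$ for $k\le K$, both by the monotonicity in~(a);
\item $T-1=du^2+2N_\varepsilon-1$, which is $\ge 0.99\,du^2$ if $N_\varepsilon=1$ or $du^2\ge 300$, and $\ge 0.4\,du^2$ always.
\end{itemize}
The paper itself gives no argument here; it cites Lemma~3.2 of \cite{V7}.
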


\begin{proof}
These are parts~(a) and (c) of Lemma~\lemLBa{} of \cite{V7}.
\end{proof}

Next we state a gap principle separating distinct squares in the sequence of
$y_{k}$'s defined by \eqref{eq:yk-defn}.

For any $k \neq 0$ with $y_{k}=y^{2}$ and recalling our assumption that
$x_{k} \neq 0$ when $y_{k}$ is a square, we let $r_{k}$ and
$s_{k}$ be the quantities in Lemma~\ref{lem:quad-rep}, using $\left( x_{k}, y_{k} \right)$
for $\left( x,y^{2} \right)$ there, let $\omega= \left( a-\sqrt{N_{\alpha}} \right)
/ \left( a+\sqrt{N_{\alpha}} \right)$ and let $\zeta_{4}^{(k)}$ be the $4$th
root of unity such that
\begin{equation}
\label{eq:zetaj-defn}
\left| \omega^{1/4} - \zeta_{4}^{(k)} \frac{r_{k}+s_{k}\sqrt{\core \left( N_{\alpha} \right)}}
{r_{k}-s_{k}\sqrt{\core \left( N_{\alpha} \right)}} \right|
= \min_{0 \leq \ell \leq 3} \left| \omega^{1/4} - e^{2\ell \pi i/4} \frac{r_{k}+s_{k}\sqrt{\core \left( N_{\alpha} \right)}}{r_{k}-s_{k}\sqrt{\core \left( N_{\alpha} \right)}} \right|.
\end{equation}

\begin{lemma}
\label{lem:gap}
Let the $y_{k}$'s be defined as in $\eqref{eq:yk-defn}$ with $N_{\alpha}<0$.
Suppose that $N_{\alpha}=-2^{\ell}p^{m}$, where $p$ is an odd prime, $\ell$ and
$m$ are non-negative integers and that $N_{\alpha}$ is not $-1$ times a square.
If $y_{k_{1}}$, $y_{k_{2}}$ and $y_{k_{3}}$
are three distinct squares with $k_{1}, k_{2}, k_{3} \neq 0$ and
$y_{k_{3}}>y_{k_{2}}>y_{k_{1}} \geq \max \left( 4\sqrt{\left| N_{\alpha} \right|/d},
4b^{2}\left| N_{\alpha} \right|/d \right)$,
then there exist distinct $i,j \in \{ k_{1},k_{2}, k_{3} \}$ such that
\[
y_{j} > \frac{15d^{2}}{b^{4}\left| N_{\alpha} \right|^{2}} y_{i}^{3}.
\]
Moreover, we can let $i$ and $j$ be any distinct elements of $\{ k_{1},k_{2}, k_{3} \}$
such that $y_{j}>y_{i}$ and $\zeta_{4}^{(i)}= \pm \zeta_{4}^{(j)}$, where
$\zeta_{4}^{(i)}$ and $\zeta_{4}^{(j)}$ are as defined in $\eqref{eq:zetaj-defn}$.
\end{lemma}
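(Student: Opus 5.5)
The plan is to follow the gap-principle argument of \cite{V5,V6,V7}, using the quartic representation of Lemma~\ref{lem:quad-rep}. For each $k \in \{k_{1},k_{2},k_{3}\}$ with $y_{k}=y^{2}$, Lemma~\ref{lem:quad-rep} applied with $\epsilon=\varepsilon^{k}$ gives integers $f_{k},r_{k},s_{k}$ with
\[
f_{k}^{2}\left( x_{k}+N_{\varepsilon}^{k}\sqrt{N_{\alpha}} \right)=\left( a+\sqrt{N_{\alpha}} \right)\left( r_{k}+s_{k}\sqrt{\core(N_{\alpha})} \right)^{4}.
\]
This needs $N_{\alpha}$ not a square, which holds since $N_{\alpha}<0$. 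It also needs $\core(|N_{\alpha}|) \in \{2,p,2p\}$, which holds because $N_{\alpha}=-2^{\ell}p^{m}$ is not $-1$ times a square.

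Taking the conjugate quotient gives
\[
\omega\left( \frac{r_{k}+s_{k}\sqrt{\core(N_{\alpha})}}{r_{k}-s_{k}\sqrt{\core(N_{\alpha})}} \right)^{4}
=\frac{x_{k}+N_{\varepsilon}^{k}\sqrt{N_{\alpha}}}{x_{k}-N_{\varepsilon}^{k}\sqrt{N_{\alpha}}},
\]
possibly with $\omega$ replaced by its inverse depending on sign conventions. Since $N_{\alpha}<0$, $\sqrt{N_{\alpha}}$ is imaginary and all these quantities lie on the unit circle. The right-hand side differs from $1$ by about $2\sqrt{|N_{\alpha}|}/|x_{k}|$, and $|x_{k}|\approx \sqrt{d}\,y_{k}$. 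Extracting fourth roots with the nearest root of unity $\zeta_{4}^{(k)}$ as in \eqref{eq:zetaj-defn}, I expect
\[
\left| \omega^{1/4}-\zeta_{4}^{(k)}\mu_{k} \right| \lesssim \frac{\sqrt{|N_{\alpha}|}}{2\sqrt{d}\,y_{k}}, \qquad \mu_{k}=\frac{r_{k}+s_{k}\sqrt{\core(N_{\alpha})}}{r_{k}-s_{k}\sqrt{\core(N_{\alpha})}} .
\]
The hypothesis $y_{k_{1}} \geq 4\sqrt{|N_{\alpha}|/d}$ is used here, to make the error in the fourth-root step small so that linearisation costs only a constant factor.

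Next, by pigeonhole, among the three indices two share $\zeta_{4}^{(i)}=\pm\zeta_{4}^{(j)}$. Only two classes occur: up to sign the fourth roots of unity give $\{1,i\}$. I then use the triangle inequality:
\[
|\mu_{i}\mp\mu_{j}| \leq \left| \omega^{1/4}-\zeta_{4}^{(i)}\mu_{i} \right|+\left| \omega^{1/4}-\zeta_{4}^{(j)}\mu_{j} \right| \lesssim \frac{\sqrt{|N_{\alpha}|}}{\sqrt{d}\,y_{i}} .
\]
On the other hand, $\mu_{i}\mp\mu_{j}$ has a numerator that is a nonzero algebraic integer multiple of $\sqrt{\core(N_{\alpha})}$ times an integer. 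It is nonzero because $y_{i}\neq y_{j}$, and I must also rule out $\mu_{i}=-\mu_{j}$, e.g.\ by comparing $x_{i}$ and $x_{j}$. So its absolute value is at least
\[
\frac{2\sqrt{|\core(N_{\alpha})|}}{|r_{i}-s_{i}\sqrt{\core}|\,|r_{j}-s_{j}\sqrt{\core}|}.
\]
Since $\core(N_{\alpha})<0$, we have $|r_{k}-s_{k}\sqrt{\core}|^{2}=r_{k}^{2}-\core(N_{\alpha})s_{k}^{2}=f_{k}y/b$ by \eqref{eq:fy-rel}, with $y=\sqrt{y_{k}}$ and $|f_{k}|\leq 4b^{2}$. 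This gives
\[
|\mu_{i}\mp\mu_{j}| \geq \frac{c\,\sqrt{|\core(N_{\alpha})|}}{b^{2}\,(y_{i}y_{j})^{1/4}}
\]
for an explicit constant $c$. Combining the two bounds yields $y_{j}^{1/4}\gtrsim \sqrt{d}\,y_{i}^{3/4}/(b^{2}\sqrt{|N_{\alpha}|})$, i.e.\ $y_{j}\gtrsim d^{2}y_{i}^{3}/(b^{8}N_{\alpha}^{2})$.

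The main obstacle is the exact exponent of $b$ and the constant $15$: the crude bound above gives $b^{8}$ rather than $b^{4}$. I would first try to sharpen the estimate on $f_{k}$. Using $f_{k}^{2}\mid$ (a norm relation), taking norms of the first identity gives $f_{k}^{4}N_{\alpha}=N_{\alpha}(r_{k}^{2}-\core s_{k}^{2})^{4}$, which is consistent with \eqref{eq:fy-rel}. The factor $b$ in \eqref{eq:fy-rel} partially cancels $f_{k}$, so one gets $|r_{k}-s_{k}\sqrt{\core}|^{2}=f_{k}y/b$ with effective size about $b\,y$; that yields $b^{4}$. The remaining care goes into the linearisation constants, where the condition $y_{k_{1}}\geq 4b^{2}|N_{\alpha}|/d$ is used, and into $|x_{k}|\geq \sqrt{d}\,y_{k}\sqrt{1-|N_{\alpha}|/(dy_{k}^{2})}$, to reach the constant $15$. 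The final clause of the lemma follows because only the relation $\zeta_{4}^{(i)}=\pm\zeta_{4}^{(j)}$ was used.
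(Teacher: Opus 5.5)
Your framework (Lemma~\ref{lem:quad-rep}, pigeonholing the $\zeta_{4}^{(k)}$ up to sign, the triangle inequality against $\omega^{1/4}$, and an algebraic lower bound for $|\mu_{i}\mp\mu_{j}|$) is the one the paper uses through inequality (3.28) of \cite{V5}. The $b^{8}$ you worry about is only a slip: by \eqref{eq:fy-rel}, $|r_{k}-s_{k}\sqrt{\core(N_{\alpha})}|=\sqrt{f_{k}/b}\,y_{k}^{1/4}$. So the denominator is $\sqrt{f_{i}f_{j}}(y_{i}y_{j})^{1/4}/b$, and $b^{4}$ comes out directly. Also, for the $+$ sign the numerator is $2(r_{i}r_{j}-\core(N_{\alpha})s_{i}s_{j})$, which has no factor $\sqrt{|\core(N_{\alpha})|}$.

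The real gap is the constant $15$. Generically the numerators are only $\geq 2$, which gives
\[
\frac{2b}{\sqrt{f_{i}f_{j}}\,(y_{i}y_{j})^{1/4}}<0.5051\sqrt{\frac{|N_{\alpha}|}{d}}\left(\frac{1}{y_{i}}+\frac{1}{y_{j}}\right).
\]
You only use $f_{k}\leq 4b^{2}$, i.e.\ $f_{i}f_{j}\leq 16b^{4}$. With that, this yields at best $y_{j}>0.96\,d^{2}y_{i}^{3}/(b^{4}N_{\alpha}^{2})$, even after feeding $y_{j}\gg y_{i}$ back into the bracket. That is a factor of $16$ short. No sharpening of the linearisation or of $|x_{k}|$ can recover it, since those only affect the $0.5051$ and the factor $1+y_{i}/y_{j}$.

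The missing idea is the paper's split according to the size of $f_{i}f_{j}$.

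If $f_{i}f_{j}\leq 4b^{4}$, the left side is at least $1/(b(y_{i}y_{j})^{1/4})$. Bootstrapping with $y_{i}\geq 4b^{2}|N_{\alpha}|/d$ then gives first $y_{j}>15.36y_{i}$, then $y_{j}>190.88y_{i}$, and finally the constant $15.04$.

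If $f_{i}f_{j}>4b^{4}$, then since $f_{i},f_{j}\mid 4b^{2}$, both exceed $b^{2}$. Hence $f_{i},f_{j}\in\{4b^{2}/3,\,2b^{2},\,4b^{2}\}$, with at least one equal to $4b^{2}$. By Lemma~\ref{lem:quad-rep} this forces $N_{\alpha}\equiv 1 \pmod{4}$ and $4\mid d$.
\begin{itemize}
\item Each $f_{k}/b$ is then even, so $r_{k}^{2}-\core(N_{\alpha})s_{k}^{2}$ is even.
\item As $\core(N_{\alpha})$ is odd, $r_{k}\equiv s_{k}\pmod{2}$ for $k=i,j$.
\item Write $x+y\sqrt{\core(N_{\alpha})}=(r_{i}-s_{i}\sqrt{\core(N_{\alpha})})(r_{j}+s_{j}\sqrt{\core(N_{\alpha})})$. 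Then both $x$ and $y$ are even.
\item So the numerators, $2|x|$ or $2|y|\sqrt{|\core(N_{\alpha})|}$, are at least $4$.
\item The left side becomes $4b/(\sqrt{f_{i}f_{j}}(y_{i}y_{j})^{1/4})\geq 1/(b(y_{i}y_{j})^{1/4})$, which restores the factor of $16$.
\end{itemize}
Three bootstrap passes then give the constant $15.11$.

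Without this parity step, your argument proves the gap principle only with a constant below $1$, not the stated $15$.
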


\begin{proof}
Our proof here will follow closely the proof of Lemma~\lemGap(b) of \cite{V5}.
As in the proof of Lemma~\lemGap{}(b) of \cite{V5} (just above equation~(3.19) on page~314),
we put $\omega= \left( a-\sqrt{N_{\alpha}} \right) / \left( a+\sqrt{N_{\alpha}} \right)$.
We can write $\omega=e^{\varphi \sqrt{-1}}$, where $-\pi<\varphi \leq \pi$.
For any real number $\nu$, we shall put $\omega^{\nu}=e^{\nu \varphi \sqrt{-1}}$.

For any $j$ among $k_{1}$, $k_{2}$ and $k_{3}$, let $\zeta_{4}^{(j)}$ be the
$4$th root of unity such that
\[
\left| \omega^{1/4} - \zeta_{4}^{(j)} \frac{r_{j}+s_{j}\sqrt{\core \left( N_{\alpha} \right)}}{r_{j}-s_{j}\sqrt{\core \left( N_{\alpha} \right)}} \right|
= \min_{0 \leq k \leq 3} \left| \omega^{1/4} - e^{(2k \pi/4) \sqrt{-1}} \, \frac{r_{j}+s_{j}\sqrt{\core \left( N_{\alpha} \right)}}{r_{j}-s_{j}\sqrt{\core \left( N_{\alpha} \right)}} \right|,
\]
where $r_{j}$ and $s_{j}$ are the values of $r$ and $s$ in Lemma~\ref{lem:quad-rep}
associated with $x=x_{j}$ and $y^{2}=y_{j}$, letting $\alpha$ in Lemma~\ref{lem:quad-rep}
be the same $\alpha=a+b^{2}\sqrt{d}$ that we use throughout this paper. As argued
in Remark~\ref{rem:xk-0} and shown in Section~\ref{sect:xk-0}, we can assume that
we have $a \neq 0$ as required for Lemma~\ref{lem:quad-rep}. Similarly, we let
$f_{j}$ be the value of $f$ in Lemma~\ref{lem:quad-rep} associated with $y_{j}$.
From Lemma~\ref{lem:quad-rep}, we have $f_{j}| \left( 4b^{2} \right)$.

Choose $i$ and $j$ from among $k_{1}$, $k_{2}$ and $k_{3}$ so that $y_{j}>y_{i}$
and $\zeta_{4}^{(i)}= \pm \zeta_{4}^{(j)}$.

Suppose $f_{i}f_{j} \leq 4b^{4}$. From Lemma~\lemGap(b) of \cite{V5}, we have
\[
y_{j}>15.36 \left( \frac{d}{4b^{2} \left| N_{\alpha} \right|} \right)^{2} y_{i}^{3}.
\]

Since $y_{i} \geq 4b^{2}\left| N_{\alpha} \right|/d$, we obtain $y_{j}>15.36y_{i}$.
Equation~(\eqGPb) of \cite{V5} states that
\begin{equation}
\label{eq:gpEqnb1}
\frac{2b}{\sqrt{f_{i}f_{j}} \left( y_{i}y_{j} \right)^{1/4}}
< 0.5051b^{2} \sqrt{\frac{\left| N_{\alpha} \right|}{a^{2}+\left| N_{\alpha} \right|}}
\left( \frac{1}{y_{i}} + \frac{1}{y_{j}} \right)
= 0.5051 \sqrt{\frac{\left| N_{\alpha} \right|}{d}}
\left( \frac{1}{y_{i}} + \frac{1}{y_{j}} \right).
\end{equation}

Applying $f_{i}f_{j} \leq 4b^{4}$ to this inequality and $y_{j}>15.36y_{i}$ to
the right-hand side of it, it follows that
\begin{equation}
\label{eq:yj-LB2}
y_{j}>11.93 \left( \frac{d}{b^{2}\left| N_{\alpha} \right|} \right)^{2} y_{i}^{3}.
\end{equation}

From \eqref{eq:yj-LB2} and $y_{i} \geq 4b^{2}\left| N_{\alpha} \right|/d$, we
have $y_{j}>190.88y_{i}$. Applying this to \eqref{eq:gpEqnb1}, as we did above
with $y_{j}>15.36y_{i}$, we obtain
\[
y_{j}>15.04 \left( \frac{d}{b^{2}\left| N_{\alpha} \right|} \right)^{2} y_{i}^{3}.
\]

Now suppose that $f_{i}f_{j}>4b^{4}$. We show that we are able to improve
\eqref{eq:gpEqnb1} here. In this case, both $f_{i}$ and $f_{j}$ are divisors
of $4b^{2}$. Since $f_{i}f_{j}>4b^{4}$ and $f_{i}, f_{j} \leq 4b^{2}$, it
follows that $f_{i}, f_{j}>b^{2}$. Hence both $4b^{2}/f_{i}$ and $4b^{2}/f_{j}$
are positive integers strictly less than $4$. Thus
$f_{i},f_{j} \in \left\{ 4b^{2}/3, 2b^{2}, 4b^{2} \right\}$. Note that the first
value requires $3|b$. Also, at least one of $f_{i}, f_{j}$ must equal $4b^{2}$
(otherwise, their product would be at most $4b^{4}$). Thus, the possibilities
for the pairs, up to interchanging $f_{i}$ and $f_{j}$, are
\[
\left( f_{i}, f_{j} \right)
\in \left\{ \left( 4b^{2}, 4b^{2} \right), \left( 4b^{2}, 2b^{2} \right),
\left( 4b^{2}, 4b^{2}/3 \right) \right\}.
\]

Since one of the values is $4b^{2}$, from Lemma~\ref{lem:quad-rep}, we must have
$N_{\alpha} \equiv 1 \pmod{4}$ and $4|d$ (i.e., $a$ is odd).

In equation~\eqref{eq:fy-rel} in Lemma~\ref{lem:quad-rep}, we have
\[
f_{i} \sqrt{y_{i}} = b \left( r_{i}^{2}-\core \left( N_{\alpha} \right) s_{i}^{2} \right),
\]
and similarly for $f_{j} \sqrt{y_{j}}$. Since both $f_{i}/b$ and $f_{j}/b$ are even
integers, it follows that both $r_{i}^{2}-\core \left( N_{\alpha} \right) s_{i}^{2}$
and $r_{j}^{2}-\core \left( N_{\alpha} \right) s_{j}^{2}$ are even.
Since $N_{\alpha}$ is odd, we see that $r_{i}$ and $s_{i}$, as well as $r_{j}$
and $s_{j}$, must have the same parity. Letting $x+y\sqrt{\core \left( N_{\alpha} \right)}$
be as in the proof of Lemma~\lemGap{} of \cite{V5} (see immediately above
equation~(\eqZetaIJRel{}) there), we write
\begin{align*}
x+y\sqrt{\core \left( N_{\alpha} \right)}
&= \left( r_{i}-s_{i}\sqrt{\core \left( N_{\alpha} \right)} \right)
\left( r_{j}+s_{j}\sqrt{\core \left( N_{\alpha} \right)} \right) \\
&=r_{i}r_{j}-s_{i}s_{j}\core \left( N_{\alpha} \right) + \left( r_{i}s_{j}-r_{j}s_{i} \right) \sqrt{\core \left( N_{\alpha} \right)}.
\end{align*}

Since $r_{i}$ and $s_{i}$ have the same parity, as do $r_{j}$ and $s_{j}$, and
since $N_{\alpha}$ is odd, we see that $x$ and $y$ are both even here.
Therefore, the expressions for the numerators in equation~(\eqGPNum{}) in \cite{V5}
have absolute values at least $4$ when $\zeta_{4}^{(j)}=\pm \zeta_{4}^{(i)}$ and
so
\begin{equation}
\label{eq:gpEqnb2}
\frac{4b}{\sqrt{f_{i}f_{j}} \left( y_{i}y_{j} \right)^{1/4}}
< 0.5051 \sqrt{\frac{\left| N_{\alpha} \right|}{d}}
\left( \frac{1}{y_{i}} + \frac{1}{y_{j}} \right).
\end{equation}

The worst case is $f_{i}=f_{j}=4b^{2}$. Here, since $y_{j}>y_{i}$, the right-hand
side of \eqref{eq:gpEqnb2} is less than $0.5051\sqrt{\left| N_{\alpha} \right|/d} \left( 2/y_{i} \right)$,
and we obtain $y_{j}>0.96 \left( d/\left( b^{2} \left| N_{\alpha} \right| \right) \right)^{2}y_{i}^{3}$
(in the case when one of $f_{i}$ is $2b^{2}$ and the other is $4b^{2}$, then we
obtain
$y_{j}>3.84 \left( d/\left( b^{2} \left| N_{\alpha} \right| \right) \right)^{2}y_{i}^{3}$).
Since $y_{i} \geq 4b^{2} \left| N_{\alpha} \right|/d$, this yields $y_{j}>15.36y_{i}$.

Using the same process as for $f_{i}f_{j} \leq 4b^{4}$ (except that we need
three passes through that process, rather than two passes as we used for
$f_{i}f_{j} \leq 4b^{4}$), we obtain our result. The coefficients obtained in
these three passes are $11.93$, $15.04$ and $15.11\ldots$.
\end{proof}

Now we collect some results that we will need for bounding quantities that
arise in the proof of the main result in the following section (Section~\ref{sect:proof-thm12}).

\begin{lemma}
\label{lem:zeta}
Let the $x_{k}$'s and $y_{k}$'s be defined as in $\eqref{eq:yk-defn}$ with the
notation and assumptions there, including $x_{k} \neq 0$. Suppose that $N_{\alpha}<0$.

{\rm (a)} Let $y_{k} \geq 4\sqrt{\left| N_{\alpha} \right|/d}$ be a square and put
\[
\omega_{k}=\left( x_{k} + N_{\varepsilon^{k}}\sqrt{N_{\alpha}} \right)
/\left( x_{k} - N_{\varepsilon^{k}}\sqrt{N_{\alpha}} \right)
=e^{i\varphi_{k}}
\]
with $-\pi<\varphi_{k} \leq \pi$.
Then
\[
\left| \varphi_{k} \right| <\frac{2.29\sqrt{\left| N_{\alpha} \right|}}{\left| x_{k} \right|}
<0.6.
\]

{\rm (b)} Let $y_{k}$ and $y_{\ell}$ be two squares with $k,\ell \neq 0$
\, \footnote{The condition $k, \ell \neq 0$ was left out of the statement of
Lemma~3.9(b) in the published version of \cite{V5}, but is required both there
and here.},
$y_{\ell}>y_{k} \geq 4\sqrt{\left| N_{\alpha} \right|/d}$ and
$\omega_{k}$ be as in part~{\rm(a)} and put
\[
x+y\sqrt{\core \left( N_{\alpha} \right)}
=\left(  r_{k}-s_{k}\sqrt{\core \left( N_{\alpha} \right)} \right)
\left( r_{\ell}+s_{\ell}\sqrt{\core \left( N_{\alpha} \right)} \right)
\]
with $r_{k},r_{\ell},s_{k},s_{\ell}$ as in Lemma~$\ref{lem:quad-rep}$.
Suppose that the quantities $\zeta_{4}^{(k)}$ and $\zeta_{4}^{(\ell)}$
defined in \eqref{eq:zetaj-defn} satisfy $\zeta_{4}^{(k)}= \pm \zeta_{4}^{(\ell)}$.
Then
\[
\min_{0 \leq j \leq 3} \left| \omega_{k}^{1/4} - \zeta_{4}^{j}
\frac{x-y\sqrt{\core \left( N_{\alpha} \right)}}{x+y\sqrt{\core \left( N_{\alpha} \right)}} \right|
\]
occurs for either $j=0$ or $j=2$, where $\zeta_{4}$ is a primitive $4$th root of
unity.
\end{lemma}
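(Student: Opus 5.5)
The proof adapts Lemma~\lemTheta{} of \cite{V5}, whose arguments do not use $b=1$ in any essential way.

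\textbf{Part~(a).} From \eqref{eq:pell}, $x_{k}^{2}=dy_{k}^{2}+N_{\alpha}$. Since $N_{\alpha}<0$, we have
\[
\left| x_{k}+N_{\varepsilon^{k}}\sqrt{N_{\alpha}} \right|^{2}
=\left| x_{k}-N_{\varepsilon^{k}}\sqrt{N_{\alpha}} \right|^{2}
=x_{k}^{2}+\left| N_{\alpha} \right|,
\]
so $\omega_{k}$ lies on the unit circle. Its argument satisfies $\varphi_{k}=\pm 2\arctan\left( \sqrt{\left| N_{\alpha} \right|}/\left| x_{k} \right| \right)$. Hence
\[
\left| \varphi_{k} \right| \leq 2\sqrt{\left| N_{\alpha} \right|}/\left| x_{k} \right|,
\]
which already gives the first inequality, since $2<2.29$. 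For the second inequality we need $\left| x_{k} \right|$ large. The hypothesis $y_{k}\geq 4\sqrt{\left| N_{\alpha} \right|/d}$ gives $dy_{k}^{2}\geq 16\left| N_{\alpha} \right|$, so
\[
x_{k}^{2}=dy_{k}^{2}-\left| N_{\alpha} \right| \geq 15\left| N_{\alpha} \right|.
\]
Therefore $\sqrt{\left| N_{\alpha} \right|}/\left| x_{k} \right|\leq 1/\sqrt{15}$, and $2.29/\sqrt{15}<0.6$. This part is routine.

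\textbf{Part~(b).} The plan has three steps.

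\begin{enumerate}
\item Use Lemma~\ref{lem:quad-rep} to relate each $\omega_{j}$ to the fourth power of $\rho_{j}=\left( r_{j}+s_{j}\sqrt{\core \left( N_{\alpha} \right)} \right)/\left( r_{j}-s_{j}\sqrt{\core \left( N_{\alpha} \right)} \right)$. Dividing the identity there by its complex conjugate gives $\omega_{j}=\omega^{-1}\rho_{j}^{4}$ (or the analogous relation, depending on conventions). So $\omega_{j}\omega$ is a fourth power of a unimodular number. With $\omega_{j}^{1/4}$ chosen by the principal-branch convention, $\omega^{1/4}$ is close to $\zeta_{4}^{(j)}\rho_{j}$ times a quantity near $1$.

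\item Form the quotient for $k$ and $\ell$. Since
\[
\frac{x-y\sqrt{\core \left( N_{\alpha} \right)}}{x+y\sqrt{\core \left( N_{\alpha} \right)}}
\]
equals $\rho_{k}/\rho_{\ell}$ up to conjugation and inversion, eliminating $\omega$ expresses $\omega_{k}^{1/4}$ as that quotient times $\zeta_{4}^{(k)}/\zeta_{4}^{(\ell)}=\pm1$, times a factor $\omega_{\ell}^{\pm1/4}$ whose argument is tiny.

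\item Bound the arguments using part~(a). Both $\left| \varphi_{k} \right|/4$ and $\left| \varphi_{\ell} \right|/4$ are less than $0.15$. Hence the argument of
\[
\omega_{k}^{1/4}\cdot\left( \pm\frac{x-y\sqrt{\core \left( N_{\alpha} \right)}}{x+y\sqrt{\core \left( N_{\alpha} \right)}} \right)^{-1}
\]
is at most about $0.3<\pi/4$. The nearest fourth root of unity to this ratio is therefore $1$, i.e.\ $j=0$ or $j=2$ in the minimum.
\end{enumerate}

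\textbf{Main obstacle.} The delicate step is the bookkeeping of branches and signs. This covers the factor $N_{\varepsilon^{k}}$ and whether $\omega_{k}$ or its inverse pairs with $\omega$. Above all, it covers the extra factor from $f_{k}$, $f_{\ell}$ and $\left( a+\sqrt{N_{\alpha}} \right)$, which must cancel in the quotient; it does, since $f^{2}$ is real. I would first write everything as exact equalities of unimodular numbers, verifying the $k,\ell\neq0$ requirement so that Lemma~\ref{lem:quad-rep} applies to both. Only then would I take fourth roots, tracking the $\zeta_{4}$ ambiguity explicitly as in the proof of Lemma~\lemTheta(b) of \cite{V5}.
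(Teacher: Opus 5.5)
Your proposal is correct and matches the paper's approach. The paper simply invokes Lemma~\lemTheta{} of \cite{V5}, and your argument is the natural reconstruction of that proof, valid for general $b$: part~(a) follows from $\varphi_k=\pm 2\arctan\bigl(\sqrt{|N_{\alpha}|}/|x_k|\bigr)$, and part~(b) follows from the exact identities $\omega\,\omega_j=\rho_j^{4}$ (Lemma~\ref{lem:quad-rep} divided by its complex conjugate) and $(x-y\sqrt{\core(N_{\alpha})})/(x+y\sqrt{\core(N_{\alpha})})=\rho_k/\rho_\ell$. The branch bookkeeping you flag resolves cleanly: since $|\varphi_j|<0.6<\pi$, the root of unity in $\omega^{1/4}=\zeta\rho_j\omega_j^{-1/4}$ is forced to be $\zeta_4^{(j)}$, so $\omega_k^{1/4}=\pm\,\omega_\ell^{1/4}\,(x-y\sqrt{\core(N_{\alpha})})/(x+y\sqrt{\core(N_{\alpha})})$ exactly, with the error factor having argument below $0.15$.
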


\begin{proof}
This is Lemma~\lemTheta{} of \cite{V5}.
\end{proof}

\section{Proof of Theorem~\ref{thm:1.2-seq-new}}
\label{sect:proof-thm12}

First note that by Theorem~1.3 of \cite{V7}, we may assume that $-N_{\alpha}$
is not a square. Thus we may apply Lemmas~\ref{lem:quad-rep} and \ref{lem:gap}
as stated above.

As in \cite{V5,V6,V7}, we break the proof into several parts. As with
Section~\sectMainProof{}\, of \cite{V7}, we provide Maple code to check the
algebraic manipulations that follow. This code can be found in the file
\verb!bAll-nAPP-proof-calcs.txt! in the \verb!maple! subdirectory of the
GitHub URL provided at the end of
Section~\ref{sect:intro}.

\subsection{Choice of indices}
\label{subsect:k-and-ell}

To prove Theorem~\ref{thm:1.2-seq-new}, we assume there are five distinct squares,
\begin{equation}
\label{eq:ass-7aa}
y_{k_{5}}>y_{k_{4}}>y_{k_{3}}>y_{k_{2}}>y_{k_{1}}
\geq \max \left( 4\sqrt{\left| N_{\alpha} \right|/d}, 4b^{2}\left| N_{\alpha} \right|/d \right),
\end{equation}
satisfying
\begin{equation}
\label{eq:ass-5aa-xxx}
k_{1} \geq 1 \text{ or } k_{1} \leq K,
\end{equation}
recalling that $K$ is the largest negative integer such that $y_{K}>b^{2}$ and
obtain a contradiction for $y_{k_{1}}$ sufficiently large. Also recall that by
our choice of $\alpha$, there are no squares among the $y_{k}$ with $K+1 \leq k \leq 0$
distinct from $y_{0}$. The last inequality in \eqref{eq:ass-7aa} arises from our
gap principle in Lemma~\ref{lem:gap} and from Lemma~\ref{lem:zeta}.
The condition in \eqref{eq:ass-5aa-xxx} also ensures that the conditions to apply
both our gap principle lemma and Lemma~\ref{lem:quad-rep} are met.

We will use extensively the notation from Section~\ref{sect:dio} in both this
section and the following section.

Using \eqref{eq:zetaj-defn}, we have $\zeta_{4}^{(k_{1})}$, \ldots, $\zeta_{4}^{(k_{5})}$
associated with $k_{1}$,\ldots, $k_{5}$. At least three of them belong to
$\{ -1, 1 \}$ or at least three of them belong to $\{ -i, i \}$. We label the
three associated indices as $m_{1}$, $m_{2}$ and $m_{3}$, where
$y_{m_{1}}<y_{m_{2}}<y_{m_{3}}$. If the set of at least three elements contains more than
three elements, we can choose any three among them.

It is in order to have three such elements that we assume we have five squares in
\eqref{eq:ass-7aa}. The reason we want three such elements is because in
Subsections~\ref{subsect:thm12-step-i} and \ref{subsect:thm12-step-ii}, we will
apply the gap principle in Lemma~\ref{lem:gap} twice.

As in Lemma~\ref{lem:zeta}, we put
$\omega_{m_{1}}=\left( x_{m_{1}}+N_{\varepsilon^{m_{1}}}\sqrt{N_{\alpha}} \right)
/\left( x_{m_{1}}-N_{\varepsilon^{m_{1}}}\sqrt{N_{\alpha}} \right)$
and let $\eta$ be the $4$th root of unity such that
\[
\left| \omega_{m_{1}}^{1/4} - \eta \frac{x-y\sqrt{\core \left( N_{\alpha} \right)}}{x+y\sqrt{\core \left( N_{\alpha} \right)}} \right|
= \min_{0 \leq j \leq 3} \left| \omega_{m_{1}}^{1/4} - e^{2j\pi i/4} \frac{x-y\sqrt{\core \left( N_{\alpha} \right)}}{x+y\sqrt{\core \left( N_{\alpha} \right)}} \right|,
\]
where $x+y\sqrt{\core \left( N_{\alpha} \right)}=\left(  r_{m_{1}}-s_{m_{1}}\sqrt{\core \left( N_{\alpha} \right)} \right)
\left( r_{m_{3}}+s_{m_{3}}\sqrt{\core \left( N_{\alpha} \right)} \right)$ with
$\left( r_{m_{1}}, s_{m_{1}} \right)$ and $\left( r_{m_{3}}, s_{m_{3}} \right)$
as in Lemma~\ref{lem:quad-rep},
which are associated with $\left( x_{m_{1}}, y_{m_{1}} \right)$ and
$\left( x_{m_{3}}, y_{m_{3}} \right)$, respectively.
We can take $\eta \in \bbQ \left( \sqrt{\core \left( N_{\alpha} \right)} \right)$.
Applying Lemma~\ref{lem:zeta}(b) with $k=m_{1}$ and $\ell=m_{3}$,
along with the fact that we chose $m_{1}$, $m_{2}$ and $m_{3}$ so that
$\zeta^{(m_{1})}=\pm \zeta^{(m_{3})}$, we find that $\eta=\pm 1 \in \bbQ \left( \sqrt{\core \left( N_{\alpha} \right)} \right)$.

This is important for us here as $\eta \left( x-y\sqrt{\core \left( N_{\alpha} \right)} \right)
/ \left( x+y\sqrt{\core \left( N_{\alpha} \right)} \right)$ must be in an
imaginary quadratic field in order to apply Lemma~\ref{lem:2.1} to obtain a
lower bound for the rightmost quantity in \eqref{eq:29} below.

\subsection{Further Prerequisites}
\label{subsect:preq}

In this subsection, we collect some inequalities that will be required in the
subsections that follow.

From (\eqOmegaUB{}) in the proof of Lemma~\lemTheta{}(b) in \cite{V5}, we have
\[
\left| \omega_{m_{1}}^{1/4} - \eta \frac{x-y\sqrt{\core\left( N_{\alpha} \right)}}{x+y\sqrt{\core\left( N_{\alpha} \right)}} \right|
<0.127.
\]
This is where we need \eqref{eq:ass-7aa} and
$m_{1}, m_{3} \neq 0$, which follows from \eqref{eq:ass-5aa-xxx}.

Thus we can apply Lemma~\lemUB{}(a) in \cite{V5} with $c_{1}=0.127$ to find that
\begin{equation}
\label{eq:29}
\frac{2\sqrt{\left| N_{\alpha} \right|}}{\sqrt{d} \, y_{m_{3}}}
= \left| \omega_{m_{1}} - \left( \frac{x-y\sqrt{\core \left( N_{\alpha} \right)}}{x+y\sqrt{\core \left( N_{\alpha} \right)}} \right)^{4} \right|
> 3.959 \left| \omega_{m_{1}}^{1/4} - \eta \frac{x-y\sqrt{\core \left( N_{\alpha} \right)}}{x+y\sqrt{\core \left( N_{\alpha} \right)}} \right|.
\end{equation}

The equality on the left-hand side is from the equalities in (\eqOmegaMinusStuffEquality{})
in the proof of Lemma~\lemTheta{}(b) in \cite{V5}.

We need to derive a lower bound for the far-right quantity in \eqref{eq:29}.
To do so, we shall use the lower bounds in Lemma~\ref{lem:2.1} with
a sequence of good approximations $p_{r}/q_{r}$ obtained from the hypergeometric
functions. We use the notation of Subsection~\ref{subsect:const} to define these
good approximations. Let $t'=\core \left( N_{\alpha} \right)$, $u_{1}=2x_{m_{1}}$
and $u_{2}=2N_{\varepsilon^{m_{1}}} \sqrt{N_{\alpha}/\core \left( N_{\alpha} \right)}$.
So $\omega$ in that subsection is our $\omega_{m_{1}}$ here. $D_{4,r}$ and
$N_{d',4,r}$ are as defined just before equation~\eqref{eq:ndn-defn}. Recall that
$g$ and $d'$ are as defined in equations~\eqref{eq:g-defn} and \eqref{eq:d-defn},
respectively. We can now define $p_{r}$ and $q_{r}$ as in \eqref{eq:7}.

We now collect inequalities for the quantities required for Lemma~\ref{lem:2.1}.

To obtain our lower bounds for $E$ and $Q$ in \eqref{eq:E-LB1} and \eqref{eq:Q-LB1}
below (where $E$ and $Q$ are defined in \eqref{eq:e-defn} and \eqref{eq:q-defn}
above), we also need
\begin{equation}
\label{eq:ass-3}
y_{m_{1}} \geq \max \left( 4\left| N_{\alpha} \right|/\sqrt{d}, |g|\cN_{d',4}/\sqrt{d} \right).
\end{equation}

In Lemma~\lemGNda{} of \cite{V7}, we showed that $|g|\cN_{d',4} \leq 16\sqrt{\left| N_{\alpha} \right|}
<16\left| N_{\alpha} \right|$. Combining this with \eqref{eq:ass-7aa}, we find
that \eqref{eq:ass-7aa} and \eqref{eq:ass-3} both hold if
\begin{equation}
\label{eq:C-2}
y_{m_{1}} \geq y_{k_{1}}
\geq \max \left( 16\left| N_{\alpha} \right|/\sqrt{d}, 4b^{2}\left| N_{\alpha} \right|/d \right).
\end{equation}

Since $y_{m_{1}} \geq 4\sqrt{\left| N_{\alpha} \right|/d}$ (from \eqref{eq:C-2}),
we obtain
\begin{equation}
\label{eq:25}
x_{m_{1}}^{2} = dy_{m_{1}}^{2}+N_{\alpha}
=dy_{m_{1}}^{2} \left( 1 + \frac{N_{\alpha}}{dy_{m_{1}}^{2}} \right)
\geq 0.9375dy_{m_{1}}^{2}.
\end{equation}

So
\begin{equation}
\label{eq:26}
\sqrt{x_{m_{1}}^{2}-N_{\alpha}}= \sqrt{dy_{m_{1}}^{2}}
< 1.04 \left| x_{m_{1}} \right|.
\end{equation}

Recall equation~(\eqELB{}) from \cite{V5} with $k$ there being $m_{1}$ here:
\begin{equation}
\label{eq:E-LB1}
E > \frac{0.1832|g|\cN_{d',4}\sqrt{d} \, y_{m_{1}}}{\left| N_{\alpha} \right|}.
\end{equation}

From $|g|\cN_{d',4} \geq 2$ (from Lemma~\lemGNda{} in
\cite{V7}) and $y_{m_{1}} \geq 16\left| N_{\alpha} \right|/\sqrt{d}$ (from \eqref{eq:C-2}),
we have $E>1$, as required for its use with Lemma~\ref{lem:2.1}.

Similarly, using equation~(\eqQLB{}) from \cite{V5}, we have
\begin{equation}
\label{eq:Q-LB1}
Q > \frac{2e^{1.68}\left( 1+\sqrt{0.9375} \right) \sqrt{d}y_{m_{1}}}{|g|\cN_{d',4}}
>\frac{21.12\sqrt{d} \, y_{m_{1}}}{|g|\cN_{d',4}}
\geq 21.12,
\end{equation}
where we use $y_{m_{1}} \geq |g|\cN_{d',4}/\sqrt{d}$ (from \eqref{eq:ass-3})
to establish the last inequality.

From equation~(\eqQUB{}) in \cite{V5}, we have
\begin{equation}
\label{eq:Q-UB2}
Q <\frac{21.47\sqrt{d} \, y_{m_{1}}}{|g|\cN_{d',4}}.
\end{equation}

Writing $\omega_{m_{1}}=e^{i\varphi_{m_{1}}}$, with $-\pi < \varphi_{m_{1}} \leq \pi$,
from the expression for $\ell_{0}$ in \eqref{eq:ell0-defn} and from Lemma~\ref{lem:zeta}(a),
we can take
\begin{equation}
\label{eq:ell-UB}
\ell_{0}=0.2685 \left| \varphi_{m_{1}} \right|<0.615\sqrt{\left| N_{\alpha} \right|}/\left| x_{m_{1}} \right|.
\end{equation}

Also from Lemma~\ref{lem:zeta}(a), we have $\left| \varphi_{m_{1}} \right|<0.6$,
so the condition $\left| \omega_{m_{1}}-1 \right|<1$ required in Lemma~\lemHypg{}\,
of \cite{V5} to apply the hypergeometric method is satisfied.

Let $q=x+y\sqrt{\core \left( N_{\alpha} \right)}=\left( r_{m_{1}}-s_{m_{1}}\sqrt{\core \left( N_{\alpha} \right)} \right)
\left( r_{m_{3}}+s_{m_{3}}\sqrt{\core \left( N_{\alpha} \right)} \right)$
and $p=x-y\sqrt{\core \left( N_{\alpha} \right)}$. Recall from (\eqQAbs{}) in
\cite{V5} that
\begin{equation}
\label{eq:qAbs}
|q| = \frac{\sqrt{f_{m_{1}}f_{m_{3}}} \left( y_{m_{1}}y_{m_{3}} \right)^{1/4}}{b}.
\end{equation}

Since $\core \left( \left| N_{\alpha} \right| \right)=2^{\ell}p^{m}$, where
$\ell,m \in \left\{ 0,1 \right\}$, from Lemma~\ref{lem:quad-rep} we have
\begin{equation}
\label{eq:f1f3UB}
f_{m_{1}}f_{m_{3}} \leq 16b^{4}.
\end{equation}

We are now ready to deduce the required contradiction from the assumptions above.
With $r_{0}$ as in Lemma~\ref{lem:2.1} using $\theta=\omega_{m_{1}}^{1/4}$ and
$c=3/4$, we follow the same process as
in \cite{V7}. In each of the four main steps, we will obtain an upper bound for
$y_{m_{1}}$, which must hold in order for the conditions defining that step to
hold. Thus we get the desired lower bound for $y_{m_{1}}$ in the statement of
Theorem~\ref{thm:1.2-seq-new} (since we want none of these four steps to be possible).

In Section~\ref{sect:proof-thm13}, we combine the lower bounds for $y_{m_{1}}$ in
each step with \eqref{eq:ym1-LB}. In this way, we get a contradiction in each step
unless $d$ is bounded from above, with an explicit bound for $d$.

Although it is not explicit in our notation, we note here that $g$, $d'$, $\cN_{d',4}$,
$p_{r}$ and $q_{r}$ all depend on the index $m_{1}$ that we have specified above.

\subsection{$r_{0}=1$ and $\eta pq_{r_{0}} \neq p_{r_{0}}q$}
\label{subsect:thm12-step-i}

We start with the upper bound for $y_{m_{3}}$ in equation~(\eqYUBStepOne{}) in
\cite{V7}:
\[
y_{m_{3}}^{3} < 2180(b(1-c))^{-4} \left( N_{\alpha}f_{m_{1}}f_{m_{3}} \right)^{2}y_{m_{1}}^{5}.
\]

Applying \eqref{eq:f1f3UB}, we obtain
\begin{equation}
\label{eq:y3UB}
y_{m_{3}}^{3} < 559000(1-c)^{-4} N_{\alpha}^{2}b^{4}y_{m_{1}}^{5}.
\end{equation}

We will now combine \eqref{eq:y3UB} with the gap principle in Lemma~\ref{lem:gap}
to show that this case cannot occur. Applying Lemma~\ref{lem:gap} twice, we
have
\[
y_{m_{3}}> \frac{15d^{2}}{b^{4} \left| N_{\alpha} \right|^{2}} y_{m_{2}}^{3}
> \left( \frac{15d^{2}}{b^{4} \left| N_{\alpha} \right|^{2}} \right)^{4} y_{m_{1}}^{9}.
\]

Combining the lower bound that this provides for $y_{m_{3}}^{3}$ with the upper
bound for $y_{m_{3}}^{3}$ in \eqref{eq:y3UB} and cancelling the common factor
of $y_{m_{1}}^{5}$ on both sides, we find that
\[
\left( \frac{15d^{2}}{b^{4} \left| N_{\alpha} \right|^{2}} \right)^{12}y_{m_{1}}^{22}
< 559000(1-c)^{-4}N_{\alpha}^{2}b^{4},
\]
provided that \eqref{eq:ass-7aa} holds. If \eqref{eq:C-2} holds, then so does
\eqref{eq:ass-7aa}.

We need \eqref{eq:ass-5aa-xxx} here, along with \eqref{eq:ass-7aa}, as they,
together with our choice of $m_{1}$, $m_{2}$ and $m_{3}$, imply that $m_{1}, m_{3} \neq 0$,
as required by Lemma~\ref{lem:gap} (which we use above).

Rearranging, taking $22$nd roots of both sides and using $c=0.75$ (a choice we
justified in Subsection~\subSectStepiii{} of \cite{V5}), we find that
\begin{equation}
\label{eq:yk-UB-step1}
\frac{0.54 b^{26/11}\left| N_{\alpha} \right|^{13/11}}{d^{12/11}}>y_{m_{1}}
\geq y_{k_{1}}
\end{equation}
must hold in this case.

\subsection{$r_{0}=1$ and $\eta pq_{r_{0}}= p_{r_{0}}q$}
\label{subsect:thm12-step-ii}

As in Subsection~\ref{subsect:thm12-step-i}, we start by proving an upper bound for
$y_{m_{3}}$ that holds for all $r_{0} \geq 1$ with $\zeta_{4}p/q=p_{r_{0}}/q_{r_{0}}$
for some $4$th root of unity, $\zeta_{4}$.

From (\eqYLUBUsingYK{}) in \cite{V5}
with $k$ and $\ell$ there set to $m_{1}$ and $m_{3}$, respectively, we have
\begin{equation}
\label{eq:yEll-UB-step2}
1.734r_{0}^{1/2} \left(4\frac{d}{\left| N_{\alpha} \right|} \right)^{r_{0}}y_{m_{1}}^{2r_{0}+1}
>y_{m_{3}}.
\end{equation}

Specialising this inequality to the case of $r_{0}=1$ and assuming \eqref{eq:ass-5aa-xxx}
and \eqref{eq:C-2}, we can apply the gap principle in Lemma~\ref{lem:gap}
twice to show that
\[
6.94 \frac{d}{\left| N_{\alpha} \right|} y_{m_{1}}^{3}>y_{m_{3}}
>15 \frac{d^{2}}{b^{4} \left| N_{\alpha} \right|^{2}} y_{m_{2}}^{3}
> \left( 15 \frac{d^{2}}{b^{4} \left| N_{\alpha} \right|^{2}} \right)^{4} y_{m_{1}}^{9}.
\]

Rearranging this, we find that
\[
\frac{0.00014b^{16}\left| N_{\alpha} \right|^{7}}{d^{7}}>y_{m_{1}}^{6}.
\]

Taking $6$th roots,
\begin{equation}
\label{eq:yk-UB-step2}
\frac{0.23b^{8/3}\left| N_{\alpha} \right|^{7/6}}{d^{7/6}}>y_{m_{1}}
\geq y_{k_{1}}
\end{equation}
must hold.

\subsection{$r_{0}>1$ and $\eta pq_{r_{0}} \neq p_{r_{0}}q$}
\label{subsect:thm12-step-iii}

Recall equation~(\eqYUBStepThree{}) from \cite{V7}:
\begin{equation}
\label{eq:y1UB-step3-b}
\left( f_{m_{1}}f_{m_{3}} \right)^{8}
\frac{0.181\left| N_{\alpha} \right|^{2} \left( |g|\cN_{d',4} \right)^{4}}{b^{16}d^{4}}
> \left( \frac{y_{m_{1}}^{4}d^{2}\left( |g|\cN_{d',4} \right)^{8}}
{59.2^{4} \left| N_{\alpha} \right|^{6}} \right)^{2r_{0}-1}.
\end{equation}

Applying \eqref{eq:f1f3UB}, the left-hand side of \eqref{eq:y1UB-step3-b}
is at most
\begin{equation}
\label{eq:ykUB-step3a}
b^{16}\frac{777.4 \cdot 10^{6} \left| N_{\alpha} \right|^{2} \left( |g|\cN_{d',4} \right)^{4}}{d^{4}}.
\end{equation}

So we need
\begin{equation}
\label{eq:yk-UB-step3}
\frac{59.2 \cdot 167^{1/(2r_{0}-1)}\left| N_{\alpha} \right|^{3/2+1/(2(2r_{0}-1))} b^{4/(2r_{0}-1)}}
{d^{1/2+1/(2r_{0}-1)} \left( |g|\cN_{d',4} \right)^{2-1/(2r_{0}-1)}}
>y_{m_{1}}
\end{equation}
to hold in order for \eqref{eq:ykUB-step3a} to exceed the right-hand side of
\eqref{eq:y1UB-step3-b}.

We now simplify this bound. First, by Lemma~\lemGNda{} in \cite{V7}, we know that
$|g|\cN_{d',4} \geq 2$. From this and $r_{0} \geq 2$, we have
$\left( |g|\cN_{d',4} \right)^{-2+1/(2r_{0}-1)} \leq 2^{-2+1/(2r_{0}-1)}$. Applying
this, followed by $d \geq 2$ and $r_{0} \geq 2$, we obtain
\begin{align*}
& \frac{59.2 \cdot 167^{1/(2r_{0}-1)}\left| N_{\alpha} \right|^{3/2+1/(2(2r_{0}-1))} b^{4/(2r_{0}-1)}}
{d^{1/2+1/(2r_{0}-1)} \left( |g|\cN_{d',4} \right)^{2-1/(2r_{0}-1)}}
\leq
\frac{59.2 b^{4/(2r_{0}-1)} \left| N_{\alpha} \right|^{3/2+1/(2(2r_{0}-1))}}
{4d^{1/2}} \cdot \left( \frac{167 \cdot 2}{d} \right)^{1/(2r_{0}-1)} \\
<& \frac{82b^{4/3} \left| N_{\alpha} \right|^{5/3}}{d^{1/2}}.
\end{align*}

So
\begin{equation}
\label{eq:yk-UB-step-iii}
\frac{82b^{4/3} \left| N_{\alpha} \right|^{5/3}}{d^{1/2}}>y_{m_{1}}
\geq y_{k_{1}}
\end{equation}
must hold.

\subsection{$r_{0}>1$ and $\eta pq_{r_{0}}= p_{r_{0}}q$}
\label{subsect:thm12-step-iv}

Recall equation~(\eqUBStepFour{}) from \cite{V7}:
\begin{equation}
\label{eq:yk-UB-step4-init}
1
> \frac{0.1911b^{4}d}{f_{m_{1}}^{2}f_{m_{3}}^{2} \left| N_{\alpha} \right|}
  \left( \frac{0.0002344 |g|^{4}\cN_{d',4}^{4}}{\left| N_{\alpha} \right|^{3}} dy_{m_{1}}^{2} \right)^{r_{0}-1}.
\end{equation}

We now proceed similarly to the way we did in Subsection~\ref{subsect:thm12-step-iii}.

Applying \eqref{eq:f1f3UB}, we obtain
\[
\frac{1340b^{4}\left| N_{\alpha} \right|}{d}
> \left( \frac{0.0002344 |g|^{4}\cN_{d',4}^{4}}{\left| N_{\alpha} \right|^{3}} dy_{m_{1}}^{2} \right)^{r_{0}-1}.
\]

In order for this inequality to hold, we must have
\begin{equation}
\label{eq:yk-UB-step4}
65.32\frac{b^{2/(r_{0}-1)}\left| N_{\alpha} \right|^{3/2+1/(2(r_{0}-1))} \cdot 1340^{1/(2(r_{0}-1))}}
{|g|^{2}\cN_{d',4}^{2} d^{1/2+1/(2(r_{0}-1))}}>y_{m_{1}}.
\end{equation}

Applying $|g|\cN_{d',4} \geq 2$ (from Lemma~\lemGNda{} in \cite{V7}), followed by
$d \geq 2$ and $r_{0} \geq 2$, we obtain
\begin{align*}
& 65.32\frac{b^{2/(r_{0}-1)}\left| N_{\alpha} \right|^{3/2+1/(2(r_{0}-1))} \cdot 1340^{1/(2(r_{0}-1))}}
{|g|^{2}\cN_{d',4}^{2} d^{1/2+1/(2(r_{0}-1))}}
\leq 16.33\frac{b^{2/(r_{0}-1)}\left| N_{\alpha} \right|^{3/2+1/(2(r_{0}-1))}}
{d^{1/2}} \left( \frac{1340}{d} \right)^{1/(2(r_{0}-1))}\\
<& \frac{423 b^{2}\left| N_{\alpha} \right|^{2}}{d^{1/2}}.
\end{align*}

Hence the above upper bound for $y_{m_{1}}$ holds if
\begin{equation}
\label{eq:yk-UB-step-iv}
\frac{423b^{2}\left| N_{\alpha} \right|^{2}}{d^{1/2}}>y_{m_{1}}
\geq y_{k_{1}}.
\end{equation}

\subsection{Consolidation}
\label{subsect:thm12-consolidation}

Recall that we have assumed that there are five distinct squares,
$y_{k_{5}}>y_{k_{4}}>y_{k_{3}}>y_{k_{2}}>y_{k_{1}}$ with $k_{1} \geq 1$ or $k_{1} \leq K$.
Also, by \eqref{eq:C-2}, we have
\[
y_{k_{1}} \geq \max \left( 16\left| N_{\alpha} \right|/\sqrt{d}, 4b^{2}\left| N_{\alpha} \right|/d \right).
\]

But from \eqref{eq:yk-UB-step1}, \eqref{eq:yk-UB-step2}, \eqref{eq:yk-UB-step-iii}
and \eqref{eq:yk-UB-step-iv}, we see that if we also have
\[
y_{k_{1}}> \max \left(
\begin{array}{l}
\displaystyle\frac{0.54 b^{26/11}\left| N_{\alpha} \right|^{13/11}}{d^{12/11}},
\displaystyle\frac{0.23b^{8/3}\left| N_{\alpha} \right|^{7/6}}{d^{7/6}},
\displaystyle\frac{82b^{4/3} \left| N_{\alpha} \right|^{5/3}}{d^{1/2}},
\displaystyle\frac{423b^{2}\left| N_{\alpha} \right|^{2}}{d^{1/2}}
\end{array}
\right),
\]
then we cannot be in any of the four cases in Subsections~\ref{subsect:thm12-step-i}--\ref{subsect:thm12-step-iv}.
Hence we get a contradiction to our assumption that there are five such distinct
squares if
\[
y_{k_{1}}> \max \left(
\frac{0.54 b^{26/11}\left| N_{\alpha} \right|^{13/11}}{d^{12/11}},
\frac{0.23b^{8/3}\left| N_{\alpha} \right|^{7/6}}{d^{7/6}},
\frac{82b^{4/3} \left| N_{\alpha} \right|^{5/3}}{d^{1/2}},
\displaystyle\frac{423b^{2}\left| N_{\alpha} \right|^{2}}{d^{1/2}},
\frac{16\left| N_{\alpha} \right|}{\sqrt{d}},
\frac{4b^{2}\left| N_{\alpha} \right|}{d}
\right).
\]

Since $b, \left| N_{\alpha} \right|, d \geq 1$, taking the largest constant, the
largest exponent on $\left| N_{\alpha} \right|$, the largest exponent on $b$ and
the smallest exponent on $d$ from the terms in this max, we get a contradiction
if
\[
y_{k_{1}}> \frac{423b^{8/3}\left| N_{\alpha} \right|^{2}}{\sqrt{d}},
\]
proving Theorem~\ref{thm:1.2-seq-new}.

In fact, we have proven the following more precise version of Theorem~\ref{thm:1.2-seq-new}.
We will use it below in Subsection~\ref{subsect:proof-thm13a} for the proof of
Theorem~\ref{thm:1.3-seq-new}(a). For the case where \(-N_\alpha\) is a square,
we can use Proposition~4.1 of \cite{V7}.

\begin{proposition}
\label{prop:1.2-seq-new}
Let $a$, $b$ and $d$ be positive integers, where $d$ is not a square, $N_{\alpha}<0$
and $N_{\alpha}=-2^{\ell}p^{m}$ with $p$ an odd prime and $\ell$, $m$ non-negative
integers. There are at most four distinct squares among the
$y_{k}$'s with $k \geq 1$ or $k \leq K$, and
\[
y_{k} > \max \left(
\frac{0.54 b^{26/11}\left| N_{\alpha} \right|^{13/11}}{d^{12/11}},
\frac{0.23b^{8/3}\left| N_{\alpha} \right|^{7/6}}{d^{7/6}},
\frac{82b^{4/3} \left| N_{\alpha} \right|^{5/3}}{d^{1/2}},
\displaystyle\frac{423b^{2}\left| N_{\alpha} \right|^{2}}{d^{1/2}},
\frac{16\left| N_{\alpha} \right|}{\sqrt{d}},
\frac{4b^{2}\left| N_{\alpha} \right|}{d}
\right).
\]
\end{proposition}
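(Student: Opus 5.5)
The proposition is exactly what the argument of Subsections~\ref{subsect:k-and-ell}--\ref{subsect:thm12-consolidation} establishes before the final simplification, so the plan is to rerun that argument and stop before taking the crude maximum. We argue by contradiction. Suppose there are five distinct squares $y_{k_{5}}>\cdots>y_{k_{1}}$, all with index $k \geq 1$ or $k \leq K$, and each exceeding the six-term maximum $M$ in the statement. Since $M \geq \max \left( 16\left| N_{\alpha} \right|/\sqrt{d}, 4b^{2}\left| N_{\alpha} \right|/d \right)$, condition \eqref{eq:C-2} holds. Hence \eqref{eq:ass-7aa} and \eqref{eq:ass-3} hold, and \eqref{eq:ass-5aa-xxx} holds by hypothesis. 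We may also assume $-N_{\alpha}$ is not a square, by Theorem~1.3 of \cite{V7}, and that $x_{k} \neq 0$ at squares, by Remark~\ref{rem:xk-0} and Section~\ref{sect:xk-0}. Thus Lemmas~\ref{lem:quad-rep}, \ref{lem:gap} and \ref{lem:zeta} all apply.

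Next, by pigeonhole on $\zeta_{4}^{(k_{1})},\ldots,\zeta_{4}^{(k_{5})}$, we select $m_{1},m_{2},m_{3}$ with $\zeta_{4}^{(m_{i})}$ all in $\{\pm1\}$ or all in $\{\pm i\}$. By Lemma~\ref{lem:zeta}(b) this gives $\eta=\pm1$, so the target $\eta p/q$ lies in the imaginary quadratic field $\bbQ \left( \sqrt{\core \left( N_{\alpha} \right)} \right)$. We then set up the hypergeometric approximations $p_{r},q_{r}$ as in Subsection~\ref{subsect:const}, taking $t'=\core \left( N_{\alpha} \right)$, $u_{1}=2x_{m_{1}}$ and $u_{2}=2N_{\varepsilon^{m_{1}}}\sqrt{N_{\alpha}/\core \left( N_{\alpha} \right)}$. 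The required facts $E>1$, $Q>21.12$ and \eqref{eq:Q-UB2}, the bound \eqref{eq:ell-UB}, the formula \eqref{eq:qAbs}, and $f_{m_{1}}f_{m_{3}} \leq 16b^{4}$ from \eqref{eq:f1f3UB} all follow from \eqref{eq:C-2}. With $c=3/4$ and $r_{0}$ as in Lemma~\ref{lem:2.1}, we combine \eqref{eq:29} with Lemma~\ref{lem:2.1}(a) or (b).

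We split into the four cases according to whether $r_{0}=1$ or $r_{0}>1$, and whether $\eta pq_{r_{0}}$ equals or differs from $p_{r_{0}}q$.

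\begin{itemize}
\item[(i)] For $r_{0}=1$ with $\eta pq_{r_{0}} \neq p_{r_{0}}q$: combining the bound \eqref{eq:y3UB} with two applications of the gap principle Lemma~\ref{lem:gap} (via $m_{1},m_{2},m_{3}$) gives \eqref{eq:yk-UB-step1}.
\item[(ii)] For $r_{0}=1$ with $\eta pq_{r_{0}}=p_{r_{0}}q$: combining \eqref{eq:yEll-UB-step2} with the gap principle gives \eqref{eq:yk-UB-step2}.
\item[(iii)] For $r_{0}>1$ with $\eta pq_{r_{0}} \neq p_{r_{0}}q$: combining \eqref{eq:y1UB-step3-b} with $|g|\cN_{d',4} \geq 2$, $d \geq 2$ and $r_{0} \geq 2$ gives \eqref{eq:yk-UB-step-iii}.
\item[(iv)] For $r_{0}>1$ with $\eta pq_{r_{0}}=p_{r_{0}}q$: the same simplifications applied to \eqref{eq:yk-UB-step4-init} give \eqref{eq:yk-UB-step-iv}.
\end{itemize}

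In every case, $y_{k_{1}} \leq y_{m_{1}}$ is bounded above by one of the first four terms of $M$. This contradicts $y_{k_{1}}>M$, so at most four such squares exceed $M$.

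The main obstacle lies in the case bounds. One must check that the inequalities imported from \cite{V5,V7} (notably \eqref{eq:y1UB-step3-b} and \eqref{eq:yk-UB-step4-init}) remain valid here. First, Lemma~\ref{lem:2.1} must be the version from \cite{V8}, allowing $p_{r},q_{r}$ in $\bbK(g')$, because $p_{r}$ and $q_{r}$ lie outside $\bbQ \left( \sqrt{t'} \right)$ for odd $r$. Second, the larger factor $16b^{4}$ in \eqref{eq:f1f3UB}, which arises from the $N_{\alpha}\equiv1 \pmod 4$, $4|d$ case of Lemma~\ref{lem:quad-rep}, must be propagated correctly into the constants. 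I would verify those constants carefully and also check the uniform-in-$r_{0}$ simplifications. Everything else is bookkeeping.
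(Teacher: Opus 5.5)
Your proposal is correct and follows the paper's own proof step for step: you assume five squares with index $k\geq 1$ or $k\leq K$ above the six-term maximum, pigeonhole the $\zeta_{4}^{(k_i)}$ to get $m_1,m_2,m_3$ with $\eta=\pm 1$, and in each of the four cases (split by whether $r_0=1$ and whether $\eta pq_{r_0}=p_{r_0}q$) derive one of \eqref{eq:yk-UB-step1}, \eqref{eq:yk-UB-step2}, \eqref{eq:yk-UB-step-iii}, \eqref{eq:yk-UB-step-iv}, each contradicting $y_{m_1}\geq y_{k_1}>M$; the proposition is exactly the consolidated maximum taken before the paper's final crude simplification. One small misattribution: the bound $f_{m_1}f_{m_3}\leq 16b^{4}$ comes from Lemma~\ref{lem:quad-rep} (since $\core(|N_\alpha|)\mid 2p$), not from \eqref{eq:C-2}.
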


\section{Proof of Theorem~\ref{thm:1.3-seq-new}}
\label{sect:proof-thm13}

Here we will combine our bounds on $y_{k}$ in Section~\ref{sect:proof-thm12}
with \eqref{eq:ym1-LB} to obtain Theorem~\ref{thm:1.3-seq-new}. As in
Section~\ref{sect:proof-thm12}, we proceed in separate steps.
Also as in that section (see \eqref{eq:C-2}), here we will assume that
there are five distinct squares,
\[
y_{k_{5}}>y_{k_{4}}>y_{k_{3}}>y_{k_{2}}>y_{k_{1}}
\geq \max \left( 16\left| N_{\alpha} \right|/\sqrt{d}, 4b^{2}\left| N_{\alpha} \right|/d \right)
\]
among the $y_{k}$'s. However here we will assume that
\begin{equation}
\label{eq:ass-5aa-yyy}
k_{1} \geq 2 \text{ or } k_{1} \leq K-1.
\end{equation}
In this way, we will obtain a contradiction once $d$ is sufficiently large.

The reason for the stronger condition on $k_{1}$ in \eqref{eq:ass-5aa-yyy},
rather than the condition in \eqref{eq:ass-5aa-xxx}, is to obtain a stronger
lower bound for $y_{k_{1}}$
from Lemma~\ref{lem:Y-LB1}(b). This will provide us with better lower bounds
for $d$.

In order to apply Lemma~\ref{lem:Y-LB1}(b), we will also assume that
\begin{equation}
\label{eq:ass-dLB-from-yLB}
d \geq \frac{300}{u^{2}}.
\end{equation}

Applying \eqref{eq:ass-5aa-yyy} and \eqref{eq:ass-dLB-from-yLB} to Lemma~\ref{lem:Y-LB1}(b),
we obtain
\begin{equation}
\label{eq:ym1-LB}
y_{k_{1}} \geq \frac{0.99|N_{\alpha}|du^{4}}{4b^{2}}.
\end{equation}

\subsection{$r_{0}=1$ and $\eta pq_{r_{0}} \neq p_{r_{0}}q$}
\label{subsect:thm13-step-i}

In Subsection~\ref{subsect:thm12-step-i}, we showed that
\[
y_{k_{1}} < \frac{0.54 b^{26/11}\left| N_{\alpha} \right|^{13/11}}{d^{12/11}}.
\]

This is \eqref{eq:yk-UB-step1}. It contradicts \eqref{eq:ym1-LB} if
\begin{equation}
\label{eq:ass-6a}
d>\frac{1.46 \left| N_{\alpha} \right|^{2/23} b^{48/23}}{u^{44/23}}.
\end{equation}

It is useful to get a lower bound for $d$ just in terms of $b$ and $u$.
Applying $\left| N_{\alpha} \right| < db^{4}$ to the right-hand side of \eqref{eq:ass-6a}
and solving the resulting inequality for $d$, we get a contradiction if
\begin{equation}
\label{eq:ass-6b}
d>\frac{1.52 b^{8/3}}{u^{44/21}}.
\end{equation}

Hence we cannot be in this step if either \eqref{eq:ass-6a} or \eqref{eq:ass-6b}
holds, along with the earlier assumptions made at the start of this section.

\subsection{$r_{0}=1$ and $\eta pq_{r_{0}}=p_{r_{0}}q$}
\label{subsect:thm13-step-ii}

Combining \eqref{eq:yk-UB-step2} with \eqref{eq:ym1-LB}, we get a contradiction
provided that
\begin{equation}
\label{eq:ass-9a}
d > \frac{0.97\left| N_{\alpha} \right|^{1/13}b^{28/13}}{u^{24/13}}.
\end{equation}

Applying $\left| N_{\alpha} \right|<db^{4}$, we obtain
\begin{equation}
\label{eq:ass-9b}
d > \frac{0.97b^{8/3}}{u^{2}}.
\end{equation}
%

\subsection{$r_{0}>1$ and $\eta pq_{r_{0}} \neq p_{r_{0}}q$}
\label{subsect:thm13-step-iii}

Combining \eqref{eq:yk-UB-step3} and \eqref{eq:ym1-LB}, if
\[
\frac{0.99|N_{\alpha}|du^{4}}{4b^{2}}
>\frac{59.2 \cdot 167^{1/(2r_{0}-1)}\left| N_{\alpha} \right|^{3/2+1/(2(2r_{0}-1))} b^{4/(2r_{0}-1)}}
{d^{1/2+1/(2r_{0}-1)} \left( |g|\cN_{d',4} \right)^{2-1/(2r_{0}-1)}},
\]
then the right-hand side of \eqref{eq:y1UB-step3-b} is greater than \eqref{eq:ykUB-step3a}.

Since $|g|\cN_{d',4} \geq 2$ (from Lemma~\lemGNda{} in \cite{V7}), this inequality
holds if
\begin{equation}
\label{eq:dLB-step3a}
d >\frac{240^{2(2r_{0}-1)/(6r_{0}-1)} \cdot 167^{2/(6r_{0}-1)} \left| N_{\alpha} \right|^{2r_{0}/(6r_{0}-1)} b^{4(2r_{0}+1)/(6r_{0}-1)}}
{u^{8(2r_{0}-1)/(6r_{0}-1)} 2^{2(4r_{0}-3)/(6r_{0}-1)}}.
\end{equation}

For $r_{0} \geq 2$, we have the following:\\
$2r_{0}/ \left( 6r_{0}-1 \right)$ decreases from $4/11=0.3636\ldots$ towards $1/3$,
so $\left| N_{\alpha} \right|^{2r_{0}/(6r_{0}-1)} \leq \left| N_{\alpha} \right|^{4/11}$,\\
$4 \left( 2r_{0}+1 \right)/ \left( 6r_{0}-1 \right)$ decreases from $20/11$
towards $4/3$,
so $b^{4(2r_{0}+1)/(6r_{0}-1)} \leq b^{20/11}$,\\
$8 \left( 2r_{0}-1 \right)/ \left( 6r_{0}-1 \right)$ increases from $24/11$
towards $8/3$,
so $u^{8(2r_{0}-1)/(6r_{0}-1)} \geq u^{24/11}$.

Also, $2 \left( 2r_{0}-1 \right)/ \left( 6r_{0}-1 \right)=2/3-4/\left( 3 \left( 6r_{0}-1 \right) \right)$
and $2 \left( 4r_{0}-3 \right)/ \left( 6r_{0}-1 \right)=4/3-14/\left( 3 \left( 6r_{0}-1 \right) \right)$,
so
\[
\frac{240^{2(2r_{0}-1)/(6r_{0}-1)} \cdot 167^{2/(6r_{0}-1)}}{2^{2(4r_{0}-3)/(6r_{0}-1)}}
=\frac{240^{2/3}}{2^{4/3}}
\left( \frac{167^{6} \cdot 2^{14}}{240^{4}} \right)^{1/(3(6r_{0}-1))}.
\]

Since $167^{6} \cdot 2^{14}/240^{4}>1$, we find that
$\left( 167^{6} \cdot 2^{14}/240^{4} \right)^{1/(3(6r_{0}-1))}$
takes its maximum value at $r_{0}=2$, where it is less than $1.752$.
Since $240^{2/3}/2^{4/3}<15.4$, \eqref{eq:dLB-step3a} holds if
\begin{equation}
\label{eq:ass-10a}
d >\frac{27 b^{20/11} \left| N_{\alpha} \right|^{4/11}}{u^{24/11}}.
\end{equation}

If we apply $\left| N_{\alpha} \right|<db^{4}$ to \eqref{eq:dLB-step3a}, we obtain
\[
d^{(4r_{0}-1)/(6r_{0}-1)} >\frac{240^{2(2r_{0}-1)/(6r_{0}-1)} \cdot 167^{2/(6r_{0}-1)} b^{(16r_{0}+4)/(6r_{0}-1)}}
{u^{8(2r_{0}-1)/(6r_{0}-1)} 2^{2(4r_{0}-3)/(6r_{0}-1)}}.
\]

That is,
\[
d>\frac{240^{2(2r_{0}-1)/(4r_{0}-1)} \cdot 167^{2/(4r_{0}-1)} b^{(16r_{0}+4)/(4r_{0}-1)}}
{u^{8(2r_{0}-1)/(4r_{0}-1)} 2^{2(4r_{0}-3)/(4r_{0}-1)}}.
\]

We have
\begin{align*}
& 240^{2(2r_{0}-1)/(4r_{0}-1)} \cdot 167^{2/(4r_{0}-1)} / 2^{2(4r_{0}-3)/(4r_{0}-1)}
=240^{1-1/(4r_{0}-1)} \cdot 167^{2/(4r_{0}-1)} / 2^{2-4/(4r_{0}-1)} \\
= & (240/4) \left( 167^{2} \cdot 16/240 \right)^{1/(4r_{0}-1)}
<176,
\end{align*}
with the maximum value for $r_{0} \geq 2$ occurring at $r_{0}=2$.

So
\[
d > \frac{176b^{4+8/(4r_{0}-1)}}
{u^{4-4/(4r_{0}-1)}}.
\]

Since $r_{0} \geq 2$, the desired lower bound for $y_{k_{1}}$ holds if
\begin{equation}
\label{eq:ass-10b}
d>\frac{176 b^{36/7}}{u^{24/7}}.
\end{equation}

\subsection{$r_{0}>1$ and $\eta pq_{r_{0}}= p_{r_{0}}q$}
\label{subsect:thm13-step-iv}


As in the previous subsections, using \eqref{eq:ym1-LB}, we want to show that
$0.99\left| N_{\alpha} \right| du^{4}/ \left( 4b^{2} \right)$
is larger than the upper bound for $y_{k_{1}}$ in \eqref{eq:yk-UB-step4}.
That is, we want to show that
\[
d^{3/2+1/(2(r_{0}-1))}
>264\frac{b^{2+2/(r_{0}-1)}\left| N_{\alpha} \right|^{1/2+1/(2(r_{0}-1))} \cdot 1340^{1/(2(r_{0}-1))}}
{u^{4}|g|^{2}\cN_{d',4}^{2}}.
\]

This becomes
\begin{equation}
\label{eq:dLB-step4a}
d>264^{2/3-2/(3(3r_{0}-2))}\frac{b^{4/3+8/(3(3r_{0}-2))}\left| N_{\alpha} \right|^{1/3+2/(3(3r_{0}-2))} \cdot 1340^{1/(3r_{0}-2)}}
{u^{8/3-8/(3(3r_{0}-2))}\left( |g|\cN_{d',4} \right)^{4/3-4/(3(3r_{0}-2))}}.
\end{equation}

As in Subsection~\ref{subsect:thm12-step-iii}, for $r_{0} \geq 2$,\\
$4/3+8/(3(3r_{0}-2)) \leq 2$, with the maximum value occurring for $r_{0}=2$,
so $b^{4/3+8/(3(3r_{0}-2))} \leq b^{2}$,\\
$1/3+2/(3(3r_{0}-2)) \leq 1/2$, with the maximum value occurring for $r_{0}=2$,
so $\left| N_{\alpha} \right|^{1/3+2/(3(3r_{0}-2))} \leq \left| N_{\alpha} \right|^{1/2}$,\\
$8/3-8/(3(3r_{0}-2)) \geq 2$, with the minimum value occurring for $r_{0}=2$,
so $u^{8/3-8/(3(3r_{0}-2))} \geq u^{2}$.

Combining this with $|g|\cN_{d',4} \geq 2$ (from Lemma~\lemGNda{} in \cite{V7})
and $r_{0} \geq 2$, our lower bound for $d$ holds if
\[
d>\frac{264^{2/3}}{2^{4/3}} 
\left( \frac{2^{4/3} \cdot 1340}{264^{2/3}} \right)^{1/(3r_{0}-2)} \frac{b^{2} \left| N_{\alpha} \right|^{1/2}}{u^{2}}.
\]

Since $264^{2/3}/2^{4/3}<16.34$ and for $r_{0} \geq 2$, we have
$\left( 2^{4/3} \cdot 1340/264^{2/3} \right)^{1/(3r_{0}-2)}<81.9^{1/(3r_{0}-2)}
\leq 82.1^{1/4}<3.02$, the maximum value of the constant term is less than $50$.
Hence
\begin{equation}
\label{eq:ass-11a}
d>\frac{50b^{2}\left| N_{\alpha} \right|^{1/2}}{u^{2}}.
\end{equation}

If we apply $\left| N_{\alpha} \right|<db^{4}$ to \eqref{eq:dLB-step4a}, we obtain
\[
d>264^{2/3-2/(3(3r_{0}-2))}\frac{b^{4/3+8/(3(3r_{0}-2))}\left( db^{4} \right)^{1/3+2/(3(3r_{0}-2))} \cdot 1340^{1/(3r_{0}-2)}}
{u^{8/3-8/(3(3r_{0}-2))}\left( |g|\cN_{d',4} \right)^{4/3-4/(3(3r_{0}-2))}}.
\]

That is,
\[
d^{2(r_{0}-1)/(3r_{0}-2)}
> 264^{2/3-2/(3(3r_{0}-2))}\frac{b^{8r_{0}/(3r_{0}-2)} \cdot 1340^{1/(3r_{0}-2)}}
{u^{8/3-8/(3(3r_{0}-2))}\left( |g|\cN_{d',4} \right)^{4/3-4/(3(3r_{0}-2))}}.
\]

So,
\[
d>\frac{264 \cdot 1340^{1/(2(r_{0}-1))} b^{4+4/(r_{0}-1)}}
{u^{4}\left( |g|\cN_{d',4} \right)^{2}}.
\]

Since $r_{0} \geq 2$ and $|g| \cN_{d',4} \geq 2$ (from Lemma~\lemGNda{} in \cite{V7}),
the desired lower bound for $y_{k_{1}}$ holds if
\begin{equation}
\label{eq:ass-11b}
d > \frac{2416b^{8}}{u^{4}}.
\end{equation}


\subsection{Proof of Theorem~\ref{thm:1.3-seq-new}(b)}
\label{subsect:step-v}

We now bring together all of the conditions that we have imposed to obtain a
contradiction from the assumption that for $d$ sufficiently large, there are
five distinct squares with $k \geq 2$ or $k \leq K-1$.

First, to be able to apply \eqref{eq:ym1-LB}, we need
\begin{equation}
\label{eq:dUB-from-ass-1a}
d \geq \frac{300}{u^{2}}.
\end{equation}

For \eqref{eq:C-2}, we need $y_{k_{1}} \geq 4b^{2}\left| N_{\alpha} \right|/d$.
Using \eqref{eq:ym1-LB}, we need to show that
$0.99\left| N_{\alpha} \right| du^{4}/ \left( 4b^{2} \right)>4b^{2}\left| N_{\alpha} \right|/d$.
This holds if
\begin{equation}
\label{eq:dUB-from-ass-2b}
d \geq 4.1 \frac{b^{2}}{u^{2}}.
\end{equation}

Also for \eqref{eq:C-2}, we need 
$y_{k_{1}} \geq 16\left| N_{\alpha} \right|/\sqrt{d}$.
By \eqref{eq:ym1-LB}, this holds if
\begin{equation}
\label{eq:dUB-from-ass-2a}
d \geq \frac{16.2b^{4/3}}{u^{8/3}}.
\end{equation}

In Subsection~\ref{subsect:thm13-step-i}, we added the assumption that
$d>1.46 b^{48/23}\left| N_{\alpha} \right|^{2/23}/u^{44/23}$ in \eqref{eq:ass-6a}.

In Subsection~\ref{subsect:thm13-step-ii}, we added the assumption that
$d>0.97b^{28/13}\left| N_{\alpha} \right|^{1/13}/u^{24/13}$ in \eqref{eq:ass-9a}.

At the end of Subsection~\ref{subsect:thm13-step-iii}, we imposed the condition that
$d>27b^{20/11} \left| N_{\alpha} \right|^{4/11}/u^{24/11}$ in \eqref{eq:ass-10a}.

Finally, at the end of Subsection~\ref{subsect:thm13-step-iv}, we imposed the condition
that $d>50b^{2}\left| N_{\alpha} \right|^{1/2}/u^{2}$ in \eqref{eq:ass-11a}.

Rounding up some of the constants in these lower bounds for $d$ and then combining them,
we have
\begin{equation}
\label{eq:d-LB2}
d \geq \max \left(
\frac{300}{u^{2}},
\frac{5b^{2}}{u^{2}},
\frac{17b^{4/3}}{u^{8/3}},
\frac{1.46b^{48/23} \left| N_{\alpha} \right|^{2/23}}{u^{44/23}}, 
\frac{b^{28/13}\left| N_{\alpha} \right|^{1/13}}{u^{24/13}},
\frac{27 b^{20/11} \left| N_{\alpha} \right|^{4/11}}{u^{24/11}},
\frac{50b^{2}\left| N_{\alpha} \right|^{1/2}}{u^{2}}.
\right)
\end{equation}

Since we are assuming that $b \geq 8$, we have $300/u^{2}<5b^{2}/u^{2}$.
Since $8^{28/13-2}=1.377\ldots$, $50b^{2}\left| N_{\alpha} \right|^{1/2}/u^{2}
<37b^{28/13}\left| N_{\alpha} \right|^{1/2}/u^{2}$. With these two replacements,
the largest coefficient in the resulting maximum is $37$.
The largest exponent on $b$ in the terms in this max is $28/13$. 
Similarly, taking the largest exponent on $\left| N_{\alpha} \right|$
as well as the smallest exponent on $u$ in their denominators provides the lower
bound for $d$ in Theorem~\ref{thm:1.3-seq-new}(b), completing its proof.

\subsection{Proof of Theorem~\ref{thm:1.3-seq-new}(a)}
\label{subsect:proof-thm13a}

Here we proceed as in Subsection~\subsectProofThmForD{} of \cite{V7}.

As in Subsection~\ref{subsect:step-v}, we are going to obtain a contradiction
from the assumption that there are
five distinct squares $y_{k}$ with $k \geq 2$ or $k \leq K-1$.

To complete the proof of Theorem~\ref{thm:1.3-seq-new}(a), we need to remove the
assumption on $d$ in \eqref{eq:d-LB2} in Subsection~\ref{subsect:step-v} for
$b=1,\ldots,7$.

If instead of \eqref{eq:ass-6a}, \eqref{eq:ass-9a}, \eqref{eq:ass-10a} and
\eqref{eq:ass-11a} that we used to obtain \eqref{eq:d-LB2}, we use \eqref{eq:ass-6b},
\eqref{eq:ass-9b}, \eqref{eq:ass-10b} and \eqref{eq:ass-11b} (rounding up the
first two constants slightly), then we obtain
\begin{equation}
\label{eq:d-LB3}
d \geq \max \left(
\frac{300}{u^{2}},
\frac{5b^{2}}{u^{2}},
\frac{17b^{4/3}}{u^{8/3}},
\frac{1.52 b^{8/3}}{u^{44/21}},  
\frac{b^{8/3}}{u^{2}}, 
\frac{176b^{36/7}}{u^{24/7}},
\frac{2416b^{8}}{u^{4}}
\right).
\end{equation}

We use \eqref{eq:d-LB3} to get initial upper bounds, $D_{b,u}$, for the values of
$d$ that we need to check. For each value of $b$, the maximum value of $D_{b,u}$
comes from $u=1$. We put $D_{b} = D_{b,1}$.

For each value of $b$, we find the smallest value of $u$ such that the right-hand
side of \eqref{eq:d-LB3} is less than $2$ and denote this value of $u$ by $U_{b}$.

For each pair $(b,u)$ with $1 \leq u< U_{b}$, consider all integers
$d=\left( t^{2} \pm 4 \right)/ u^{2} \leq D_{b,u}$.

For each such $(b,d,t,u)$, we find all positive integers $a$ such that $-N_{\alpha}
=db^{4}-a^{2}$ can be written as $2^{\ell}p^{m}$, where $p$ is an odd prime and $\ell$,
$m$ are non-negative integers. In fact, checking this can take a considerable amount
of time compared to just checking the sequence associated with a tuple, $(a,b,d,t,u)$,
so instead we check a weaker condition. If the odd part of $N_{\alpha}$ is divisible
by at least one odd prime, $p \leq 61$, and has another factor greater than $1$
and relatively prime to $p$, then we eliminate it. Otherwise, we consider the
tuple, $(a,b,d,t,u)$.

We found $c_{1,b}$ such tuples $(a,b,d,t,u)$, where $c_{1,b}$ is as in Table~\ref{table:b-data}.
For none of them does the lower bound for $d$ in \eqref{eq:d-LB3} hold. Instead
we use Proposition~\ref{prop:1.2-seq-new}, which applies to all relevant sequences,
to treat them.

From Proposition~\ref{prop:1.2-seq-new}, we know that for each sequence, there
are at most four distinct squares with $y_{k}$ sufficiently large, where $k \geq 1$
or $k \leq K$. So for each of these tuples, $(a,b,d,t,u)$, for $k \geq 2$ or
$k \leq K-1$, we checked for squares among the $y_{k}$'s not satisfying the lower
bound on $y_{k}$ in Proposition~\ref{prop:1.2-seq-new}.
For each $b$, we found $c_{2,b}$ such tuples among the $c_{1,b}$ tuples considered
above.

As in \cite{V7}, for speed, we wrote the
code in the Java programming language, which we ran on a
Windows laptop with an Intel i7-13700H CPU and 32~GB of memory.
We record the CPU time for each value of $b$ in Table~\ref{table:b-data}.
The Java code can be found in the file
\verb!BAllNaPPSearch.java! in the \verb!java! subdirectory of the GitHub URL provided at the end of
Section~\ref{sect:intro}.

We checked each of the remaining $c_{2,b}$ sequences individually. First, we checked
that the odd part of $-N_{\alpha}$ was actually either $1$ or an odd prime power.
From \eqref{eq:pell}, squares in the associated sequences of $y_{k}$'s must arise
from positive integer solutions of $x^{2}-dy^{4}=N_{\alpha}$ (although note that
solutions of this equation may
arise from sequences other than the sequences of $y_{k}$'s that we are considering
here). We would like to find all integer solutions of these equations.
However, solving some of the resulting equations directly would take a very long
time because their coefficients are large.

To eliminate these large values of $d$ and $\left| N_{\alpha} \right|$, we first use
\eqref{eq:yk-UB-step3} with $|g|\cN_{d',4} \geq 2$ instead of \eqref{eq:yk-UB-step-iii}
and use \eqref{eq:yk-UB-step4} with $|g|\cN_{d',4} \geq 2$ instead of \eqref{eq:yk-UB-step-iv}.
Since these bounds decrease as $r_{0}$ increases and since for any fixed value of
$r_{0}$, these upper bounds for $y_{m_{1}}$ depend
only on the sequence and not on $y_{m_{1}}$ itself and since we know from Lemma~\ref{lem:Y-LB1}(a)
that the sequence is monotonic in both directions under the conditions here, this
allows us to obtain all possible values of $y_{m_{1}}$ for these remaining sequences.
For each of these values of $y_{m_{1}}$, we can use the proof of Proposition~\propRep{}
of \cite{V5} to determine the smallest value of $f_{m_{1}}$.
Using this value of $f_{m_{1}}$ in \eqref{eq:y1UB-step3-b} and \eqref{eq:yk-UB-step4-init}
leaves us with $206$ distinct tuples.
These checks were done using the PARI/GP \cite{Pari} code in
\verb!b1-7-nAPP-post-java-checks.gp! in the pari subdirectory of the GitHub URL.

We used the code in \verb!quartic_thue.gp! in \url{https://github.com/PV-314/hypgeom/general/pari}.
This code is described in Subsection~5.6 of \cite{V8}. Here it is executed for
the $206$ required curves in \verb!b1-7-nAPP-solve.gp! in the pari subdirectory
of the GitHub URL.
All but $6$ equations had at most $3$ solutions
in positive integers. The remaining
$6$ equations each had $4$ solutions in positive integers. Thus
Theorem~\ref{thm:1.3-seq-new}(a) holds.
In fact, one can also show that none of the $6$ remaining equations leads to
counterexamples to Conjecture~\ref{conj:1-seq}.
The $6$ equations have $\left( d, N_{\alpha} \right) \in
\left\{ (42,-38), (168, -152), (672, -608)\right.$,
$\left. (42,-608), (168,-2432), (672, -9728) \right\}$,
so they  actually all arise from $\left( d, N_{\alpha} \right)=(42,-38)$,
which has the solutions $(x,y)=(2, 1)$, $(58, 3)$, $(2858, 21)$, $(74198, 107)$.

\begin{center}
\begin{table}[h]
\begin{tabular}{|c|r|r|r|r|r|}\hline
\multicolumn{1}{|c|}{$b$} &
\multicolumn{1}{|c|}{$D_{b}$} &
\multicolumn{1}{|c|}{$U_{b}$} &
\multicolumn{1}{|c|}{$c_{1,b}$} &
\multicolumn{1}{|c|}{$c_{2,b}$} &
\multicolumn{1}{|c|}{\text{CPU time}} \\ \hline
%
%
%
%
%
%
%
  1  &           $2416$ &   $13$  &            $1421$ &   $10$    &       \text{$<$1s} \\ \hline
  2  &        $618,496$ &   $24$  &         $902,371$ &   $27$    &       \text{$<$1s} \\ \hline
  3  &     $15,851,376$ &   $54$  &      $44,478,366$ &   $71$    &       \text{$<$1s} \\ \hline
  4  &    $158,334,976$ &   $95$  &     $920,888,687$ &   $64$    &         \text{11s} \\ \hline
  5  &    $943,750,000$ &  $148$  &   $8,167,556,953$ &   $67$    &      \text{5m 51s} \\ \hline
  6  &  $4,057,952,256$ &  $213$  &  $45,539,991,918$ &  $148$    & \text{ 1h 29m 22s} \\ \hline
  7  & $13,927,759,216$ &  $289$  & $243,290,160,092$ &  $133$    & \text{13h  6m 41s} \\ \hline
\end{tabular}
\caption{Data for $b$}
\label{table:b-data}
\end{table}
\end{center}

\section{$x_{k}=0$}
\label{sect:xk-0}

As stated in Remark~\ref{rem:xk-0} in Subsection~\ref{subsect:notation}, we have
excluded until now the case where $x_{k}=0$ and $y_{k}$ is an integer square. Here we
treat this possibility. As noted in that remark, we can write $\alpha=b^{2} \sqrt{d}$.
So $y_{0}=b^{2}$ and $y_{1}=b^{2} \left( t^{2}+du^{2} \right)/4$.
Since $t^{2}+du^{2}=\pm 4+2du^{2}$, if $du$ is even, then $\left( t^{2}+du^{2} \right)/4$
is an integer and hence $b^{2}|y_{k}$. By our recurrence relation in \eqref{eq:yk-recurrence},
we find that $b^{2}$ divides all the $y_{k}$'s in this case.

If $du$ is odd, then $t$ must also be odd and $d \equiv 1 \pmod{4}$ (since
$t^{2}-du^{2}= \pm 4$). In this case, $\left( t^{2}+du^{2} \right)/4$
is not an integer, nor is $y_{2}/b^{2}$. However, one can show that $y_{3}/b^{2}$
is an integer and, by our recurrence relation, that $y_{k}/b^{2}$ is an integer
if and only if $3|k$. If $3 \nmid k$, then $2y_{k}/b^{2}$ is odd. Hence either
$y_{k} \notin \bbZ$ if $b$ is odd, or $v_{2} \left( y_{k} \right)
=2v_{2}(b)-1$ is odd if $b$ is even. Thus $y_{k}$ cannot be a square when $3 \nmid k$.

Whenever $y_{k}/b^{2} \in \bbZ$, put $y_{k}=b^{2}y_{k}'$. From \eqref{eq:pell}, we have
\[
x_{k}^{2}-db^{4}{ y_{k} '}^{2}=-db^{4}.
\]

Thus $b^{4}|x_{k}^{2}$ and writing $x_{k}=b^{2}x_{k}'$, we have ${x_{k}'}^{2}-d{ y_{k} '}^{2}=-d$, so $d|{x_{k}'}^{2}$,
implying that $\left( \core^{2}(d) \left( d/\core(d) \right) \right)| {x_{k}'}^{2}$. I.e.,
$x_{k}'= \core(d) \sqrt{d/\core(d)} \, x_{k}''$, recalling from the definition of
$\core(d)$, that $d/\core(d)$ is a square. Also if $y_{k}$ is a square and
$y_{k}' \in \bbZ$, then $y_{k}'=y_{k}''$ for $y_{k}'' \in \bbZ$. Hence,
$\core(d) {x_{k}''}^{2}-{ y_{k}''}^{4}=-1$. By the work of Ljunggren \cite{Lj1,Lj2},
there are at most $2$ solutions in positive integers of
$\core(d) x^{2}-y^{4}=-1$. Therefore, there are at most $3$ distinct squares in the
sequence, $\left( y_{k} \right)$, and an even stronger version of
Theorem~\ref{thm:1.2-seq-new} holds for such sequences.
In fact, we have shown
that a stronger version of Conjecture~\ref{conj:1-seq} holds in this case.

\section{Data Availability}

No data beyond that present in the paper itself is used in this work.

\section{Conflict of Interests}

None.

\end{document}